\newtheorem{Def}{Definition}[section]
\newtheorem{Th}[Def]{Theorem}
\newtheorem{Prop}[Def]{Proposition}
\newtheorem{Rem}[Def]{Remark}
\newtheorem{Ex}[Def]{Example}
\newtheorem{Nota}[Def]{Notation}
\newcommand{\R}{\mathbb{R}}
\newcommand{\Z}{\mathbb{Z}}
\newcommand{\C}{\mathbb{C}}
\renewcommand{\P}{\mathbb{P}}
\newcommand{\CC}{\mathcal{C}}
\newcommand{\CE}{\mathcal{E}}
\newcommand{\DS}{\displaystyle }
\newcommand{\al}{\alpha }
\newcommand{\be}{\beta }
\newcommand{\ga}{\gamma }
\newcommand{\de}{\delta }
\newcommand{\Ga}{\Gamma }
\newcommand{\De}{\Delta }
\newcommand{\vep}{\varepsilon }
\newcommand{\vph}{\varphi }
\newcommand{\la}{\lambda }
\newcommand{\om}{\omega }
\newcommand{\si}{\sigma }
\newcommand{\na}{\nabla }
\newcommand{\pa}{\partial }
\newcommand{\bu}{\bullet}
\begin{document}

\title[Twisted cycles of $F_C$]
{Twisted cycles and twisted period relations \\
for Lauricella's hypergeometric function $F_C$
}

\author[Y. Goto]
{Yoshiaki Goto}

\address{Department of Mathematics, Hokkaido University, Sapporo 060-0810, Japan }
\email{y-goto@math.sci.hokudai.ac.jp}

\subjclass[2010]{33C65}
\keywords{hypergeometric functions, Lauricella's $F_{C}$, 
twisted cycles, twisted period relations.}
\dedicatory{}



\maketitle

\begin{abstract}
We study Lauricella's hypergeometric function $F_C$ by 
using twisted (co)homology groups. 
We construct twisted cycles with respect to an 
integral representation of Euler type of $F_C$. 
These cycles correspond to $2^m$ linearly independent solutions to 
the system of differential equations annihilating $F_C$. 
Using intersection forms of twisted (co)homology groups, we obtain 
twisted period relations which give quadratic relations for Lauricella's $F_C$.
\end{abstract}

\section{Introduction}
Lauricella's hypergeometric series $F_C$ of $m$-variables $x_1 ,\ldots ,x_m$ 
with complex parameters $a,b,c_1 ,\dots ,c_m$ is defined by 
\begin{align*}
 F_{C} (a,b,c ;x ) 
 =\sum_{n_{1} ,\ldots ,n_{m} =0} ^{\infty } 
 \frac{(a,n_{1} +\cdots +n_{m} )(b,n_{1} +\cdots +n_{m} )}
 {(c_{1} ,n_{1} )\cdots (c_{m} ,n_{m} ) n_{1} ! \cdots n_{m} !} x_{1} ^{n_{1}} \cdots x_{m} ^{n_{m}} ,
\end{align*}
where $x=(x_1 ,\ldots ,x_m),\ c=(c_1 ,\ldots ,c_m)$, 
$c_1 ,\ldots ,c_m \not\in \{ 0,-1,-2,\ldots \}$ and $(c_1 ,n_1)=\Gamma (c_1+n_1)/\Gamma (c_1)$. 
This series converges in the domain 
$$
D_{C} :=\left\{ (x_{1} ,\ldots ,x_{m} ) \in \C ^m  \ \middle| \ \sum _{k=1} ^{m} \sqrt{|x_{k}|} <1  \right\} ,
$$
and admits the integral representation (\ref{integral}). 
The system $E_C (a,b,c)$ of differential equations annihilating $F_C (a,b,c;x)$ is a holonomic system 
of rank $2^m$ with the singular locus $S$ given in (\ref{sing-locus}). 
There is a fundamental system of solutions to $E_C (a,b,c)$ in a simply connected domain in $D_C -S$, 
which is given in terms of Lauricella's hypergeometric series $F_C$ with different parameters, 
see (\ref{series-sol}) for their expressions. 

In the case of $m=2$, the series $F_C(a,b,c;x)$ and the system $E_C(a,b,c)$ are called 
Appell's hypergeometric series $F_4(a,b,c;x)$ and system $E_4(a,b,c)$ of differential equations, 
which are studied in \cite{GM} by twisted (co)homology groups concerning with 
the integral representation (\ref{integral}) for $m=2$.
We construct twisted cycles corresponding to four solutions to $E_4(a,b,c)$ expressed by 
the series $F_4$, and evaluate their intersection numbers. We also evaluate 
the intersection matrix for a basis of the twisted cohomology group. 
By using these results, we determine the monodromy representation of $E_4(a,b,c)$ 
and give twisted period relations, which are quadratic relations between two 
fundamental systems of $E_4$ with different parameters. 

In this paper, we construct $2^m$ twisted cycles which represent elements 
of the $m$-th twisted homology group 
concerning with the integral representation (\ref{integral}). 
They imply integral representations of the solutions (\ref{series-sol}) 
expressed by the series $F_C$. 
We evaluate the intersection numbers of these $2^m$ twisted cycles. 
We also evaluate the intersection numbers of some elements of 
the twisted cohomology group. 
These results imply twisted period relations for two fundamental systems of $E_C$ 
with different parameters, refer to Theorem \ref{TPR} for their explicit forms. 

In the study of twisted homology groups, twisted cycles 
given by bounded chambers are useful. 
However there are few bounded chambers in our case. 
By avoiding this difficulty, we succeed in constructing $2^m$ twisted cycles. 
We explain our idea in the construction. 
Our twisted homology group is defined by the multi-valued function 
\begin{align*}
u(t) =& \prod_{k=1}^m t_k ^{1-c_k +b} \cdot v^{\sum_{k=1}^m c_{k} -a-m+1} \cdot w ^{-b} ,\\
  & v=1-\sum_{k=1}^m t_k ,\ w= \prod_{k=1}^m t_k \cdot (1-\sum_{k=1}^m \frac{x_k}{t_k}) ,
\end{align*}
for fixed small positive real numbers $x_1 ,\ldots ,x_m$. 
Let $\{ i_1 ,\ldots ,i_r \}$ be a subset of $\{ 1,\ldots ,m \}$ of cardinality $r$ 
and let $\{ j_1 ,\ldots ,j_{m-r} \}$ be its complement. 
We embed the direct product of an $r$-simplex $\tau_{i_1 \cdots i_r}$ and
an $(m-r)$-simplex $\tau' _{j_1 \cdots j_{m-r}}$ into the bounded chamber 
$$
\{ (t_1 ,\ldots ,t_m) \in \R^m \mid t_1 ,\ldots ,t_m ,\ v,\ w >0 \} .
$$
We consider an $m$-dimensional twisted chain given by the image of 
$\tau_{i_1 \cdots i_r} \times \tau' _{j_1 \cdots j_{m-r}}$ with 
loading a branch of $u(t)$ on it. 
As is in Section 3.2.4 of \cite{AK}, 
we can eliminate its boundary by regarding the image 
of $\tau_{i_1 \cdots i_r}$ as the simplex 
$$
\{ (s_{i_1} ,\ldots ,s_{i_r}) \in \R^r \mid 
s_{i_1},\ldots ,s_{i_r} ,1-\sum_{p=1}^{r} s_{i_p} \geq \vep \} ,
$$
and that of $\tau'_{j_1 \cdots j_{m-r}}$ as the simplex 
$$
\{ (t_{j_1} ,\ldots ,t_{j_{m-r}}) \in \R^{m-r} \mid 
t_{j_1},\ldots ,t_{j_{m-r}} ,1-\sum_{q=1}^{m-r} t_{j_q} \geq \vep \} ,
$$
where $s_{i_p}=x_{i_p}/t_{i_p}$ for $p=1,\ldots ,r$ and 
$\vep$ is a certain positive real number. 
In this elimination of the boundary, we must regulate the difference of branches 
of $u(t)$ by a different way from the usual regularization. 
We realize this elimination by using the twisted homology group defined by 
another multi-valued function, see Section \ref{section-cycle} for details. 
Our first main theorem states that this twisted cycle corresponds to 
the solution to $E_C(a,b,c)$ with the power function $\prod_{p=1}^r x_{i_p} ^{1-c_{i_p}}$. 
Our construction of twisted cycles is also useful in the study on the system of 
differential equations annihilating Lauricella's hypergeometric series $F_A$. 
Refer to \cite{G-FA}, 
for the twisted cycles corresponding to the solutions expressed by the series $F_A$ 
and twisted period relations for $F_A$. 

By our first main theorem and the proof of Lemma 4.1 in \cite{GM}, 
it turns out that the intersection matrix becomes diagonal. 
Moreover, our construction and results in \cite{KY} enable us to evaluate 
the diagonal entries of the intersection matrix. 
If the intersection matrix for bases of twisted homology groups is 
evaluated, then the intersection numbers of some elements of 
twisted cohomology groups imply twisted period relations, 
which are originally identities among the integrals given by 
the pairings of elements of twisted homology and cohomology groups. 
Our first main theorem transforms these identities into 
quadratic relations among hypergeometric series $F_C$'s. 
Our second main theorem states these formulas in Section \ref{section-TPR}. 

As is in \cite{HT}, the irreducibility condition of the system $E_C(a,b,c)$ is 
known to be 
\begin{align*}
  a-\sum _{p=1}^{r} c_{i_r} ,\ \ b-\sum _{p=1}^{r} c_{i_r} \not\in \Z 
\end{align*}
for any subset $\{ i_1 ,\ldots ,i_r \}$ of $\{ 1,\ldots ,m \}$. 
Since our interest is in the property of solutions to $E_C (a,b,c)$ 
expressed in terms of the hypergeometric series $F_C$, 
we assume throughout this paper that the parameters $a,\ b$  and 
$c=(c_1 ,\ldots ,c_m)$ 
satisfy the condition above and $c_1 ,\ldots ,c_m \not\in \Z$.

\section{Differential equations and integral representations}
In this section, we collect some facts about Lauricella's $F_C$ 
and the system $E_C$ of 
hypergeometric differential equations annihilating it. 

\begin{Nota}\label{k}
Throughout this paper, the letter $k$ always stands for an index running from $1$ to $m$. 
If no confusion is possible, 
$\DS \sum_{k=1} ^m$ and $\DS \prod_{k=1} ^m$ are often simply denoted by 
$\sum$ (or $\sum_k$) and $\prod$ (or $\prod_k$), respectively. 
For example, under this convention  
$F_C (a,b,c;x)$ is expressed as 
\begin{align*}
F_{C} (a,b,c ;x ) 
=\sum_{n_{1} ,\ldots ,n_{m} =0} ^{\infty } 
 \frac{(a,\sum n_{k} )(b,\sum n_{k} )}
 {\prod (c_{k} ,n_{k} )\cdot \prod n_{k} ! } \prod x_{k} ^{n_{k}} . 
\end{align*}
\end{Nota}

Let $\pa_k \ (k=1,\dots ,m)$ be the partial differential operator with respect to $x_k$. 
Lauricella's $ F_C (a,b,c;x)$ satisfies hypergeometric differential equations 
\begin{align*}
& \Bigl[
x_{k} (1-x_{k} )\pa _{k} ^{2} -x_{k} \sum _{\stackrel{1\leq i\leq m}{i\neq k}} x_{i} \pa _{i} \pa _{k}
-\sum _{\stackrel{1\leq i,j\leq m}{i\neq k}} x_{i} x_{j} \pa _{i} \pa _{j} \\
& +(c_{k} -(a+b+1)x_{k} )\pa _{k} -(a+b+1)\sum _{\stackrel{1\leq i\leq m}{i\neq k}} x_{i} \pa _{i} -ab
\Bigr] f(x)=0, 
\end{align*}
for $k=1,\ldots ,m$. 
The system generated by them is 
called Lauricella's system $E_C (a,b,c)$ of hypergeometric differential equations. 

\begin{Prop}[\cite{HT}, \cite{L}] \label{solution}
The system $E_{C} (a,b,c)$ is a holonomic system of rank $2^m$ with the singular locus 
\begin{align}
\label{sing-locus}
S:= \left( 
\prod_k x_{k} \cdot \prod _{\vep _{1} ,\ldots ,\vep _{m} =\pm 1} (1+\sum_{k} \vep _{k} \sqrt{x_{k}})=0
\right) \subset \C^m . 
\end{align}
If $c_1 ,\ldots ,c_m \not\in \Z$, then 
the vector space of solutions to $E_{C} (a,b,c)$ in a simply connected domain in 
$D_C -S$ is spanned by the following $2^m$ elements: 
\begin{align}
\label{series-sol} 
f_{i_{1} \cdots i_{r}} 
:=\prod _{p=1} ^{r} x_{i_{p}} ^{1-c_{i_{p}}} \cdot 
F_{C} \left( a+r-\sum _{p=1} ^{r} c_{i_{p}} ,b+r-\sum _{p=1} ^{r} c_{i_{p}} ,c^{i_1 \cdots i_r} ;
x \right) . 
\end{align}
Here $r$ runs from $0$ to $m$, indices $i_{1} ,\ldots ,i_{r}$ satisfy $1\leq i_{1} <\cdots <i_{r} \leq m$, 
and the row vector $c^{i_1 \cdots i_r}$ is defined by 
$$
c^{i_1 \cdots i_r} :=c+2\sum_{p=1}^r (1-c_{i_p})e_{i_p} ,
$$
where $e_{i}$ is the $i$-th unit row vector of $\C^m$.
\end{Prop}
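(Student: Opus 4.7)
The plan is to separate the proposition into three assertions: that each $f_{i_1\cdots i_r}$ is annihilated by $E_C(a,b,c)$, that the $2^m$ functions so constructed are linearly independent, and that the system is holonomic of rank exactly $2^m$ with singular locus $S$.

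For the first assertion, I would handle the base case $r=0$ directly: substituting the series $F_C(a,b,c;x)$ into the $k$-th operator $L_k$ of $E_C(a,b,c)$ and extracting the coefficient of $\prod x_k^{n_k}$, the vanishing follows from the Pochhammer recurrence $(\al, n+1) = (\al + n)(\al, n)$ after regrouping terms of neighbouring indices $n_k$ and $n_k+1$. For $r \ge 1$, I would rewrite each $L_k$ in terms of Euler operators $\theta_k = x_k \pa_k$, since the substitution $f = \prod_p x_{i_p}^{1-c_{i_p}} g$ simply shifts $\theta_{i_p} \mapsto \theta_{i_p} + 1 - c_{i_p}$ when acting on $g$. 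A matching computation then shows that the shifted operators are precisely the $L_k$'s of the system $E_C(a+r-\sum c_{i_p},\, b+r-\sum c_{i_p},\, c^{i_1\cdots i_r})$, and the base case applied to these new parameters provides $g = F_C(\cdots)$ as a solution, whence $f_{i_1\cdots i_r}$ solves $E_C(a,b,c)$.

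For linear independence, the leading term of $f_{i_1\cdots i_r}$ near $x=0$ is $\prod_p x_{i_p}^{1-c_{i_p}}$, and by the hypothesis $c_k \notin \Z$ the $2^m$ exponent vectors $\sum_p(1-c_{i_p})e_{i_p}$ are pairwise incongruent modulo $\Z^m$, ruling out any nontrivial linear relation; this is a multivariate Frobenius argument at the regular singular locus $\prod x_k = 0$. This already yields the lower bound $\ge 2^m$ on the rank. For the matching upper bound and the exact equation of $S$, I would either compute the characteristic ideal of $L_1,\ldots,L_m$ in the Weyl algebra and read off the singular locus from the vanishing of its principal symbol determinant, or realize $E_C(a,b,c)$ as a contiguity reduction of a suitable GKZ hypergeometric system whose rank equals the normalized volume of the associated lattice polytope; for $E_C$ this volume is $2^m$. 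The factor $\prod_{\vep_k = \pm 1}(1+\sum \vep_k \sqrt{x_k})$ in $S$ becomes transparent after the substitution $x_k = y_k^2$, where it appears as the ordinary discriminant of the resulting polynomial system.

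The principal obstacle is establishing the upper bound on the rank together with the exact form of $S$: a direct symbol computation in the Weyl algebra in $m$ variables is notationally heavy, and either the GKZ reformulation or a careful Gr\"obner-basis argument is needed to extract both the rank and the discriminant factor. By contrast, the Euler-operator reduction is mechanical, though it does require verifying that the cross-terms $x_{i_p}\pa_{i_p}\pa_j$ appearing in $L_j$ for $j \notin \{i_1,\ldots,i_r\}$ transform compatibly under the substitution, a short bookkeeping exercise that must be performed uniformly in $j$.
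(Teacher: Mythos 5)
This proposition is not proved in the paper at all: it is quoted from the references \cite{L} and \cite{HT} (the singular locus in particular is the main result of Hattori--Takayama), so there is no in-paper argument to compare yours against. Judged on its own terms, the first two thirds of your outline are sound. The Euler-operator reduction is the right tool: writing the $k$-th generator of $E_C$ as $\theta_k(\theta_k+c_k-1)-x_k(\sum_l\theta_l+a)(\sum_l\theta_l+b)$ with $\theta_k=x_k\pa_k$, the conjugation by $\prod_p x_{i_p}^{1-c_{i_p}}$ sends $\theta_{i_p}\mapsto\theta_{i_p}+1-c_{i_p}$ and visibly produces the system with parameters $(a+r-\sum c_{i_p},\,b+r-\sum c_{i_p},\,c^{i_1\cdots i_r})$; there is no separate bookkeeping for cross-terms once you stay in Euler form. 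Your linear-independence argument is also correct: for $I\neq J$ the exponent vectors differ by $\pm(1-c_k)$ in some coordinate $k$ of the symmetric difference, so the local monodromy characters around $x_k=0$ are pairwise distinct when $c_k\notin\Z$, and a character-separation (Frobenius) argument kills any nontrivial relation. This legitimately gives $\mathrm{rank}\,E_C\geq 2^m$ and the spanning claim once the upper bound is known.

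The genuine gap is the last third. For the upper bound and the exact singular locus you only name two possible strategies ("compute the characteristic ideal" or "reduce to a GKZ system of normalized volume $2^m$") without carrying either out, and these are precisely where the mathematical content lies. $E_C$ is not itself a GKZ system; relating its rank to a normalized volume requires an explicit restriction or dehomogenization argument that must be set up and verified, and the rank of the associated GKZ system can jump for non-generic parameters, so one must also check that the reduction respects the irreducibility/genericity hypotheses. The singular locus is even more delicate: determining that the non-coordinate component is exactly $\prod_{\vep_1,\ldots,\vep_m=\pm1}(1+\sum_k\vep_k\sqrt{x_k})=0$, with no extraneous components, is the substance of \cite{HT} and rests on a Gr\"obner-basis/characteristic-variety computation; the remark that the factor "appears as the ordinary discriminant after $x_k=y_k^2$" identifies a plausible answer but does not derive it, nor does it exclude additional components of the projection of the characteristic variety. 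As written, the proposal establishes that the $f_{i_1\cdots i_r}$ are $2^m$ independent solutions, but not that they span, and not the stated equation of $S$.
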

For the above $i_1 ,\ldots ,i_r$, we take $j_{1} ,\ldots ,j_{m-r}$ so that 
$1\leq j_{1} <\cdots <j_{m-r} \leq m$ and $\{ i_{1} ,\ldots ,i_{r} ,j_{1} ,\ldots ,j_{m-r} \} =\{1,\ldots ,m \}$. 
It is easy to see that the $i_p$-th entry of $c^{i_1 \cdots i_r}$ is $2-c_{i_p}$ ($1\leq p \leq r$) and 
the $j_q$-th entry is $c_{j_q}$ ($1 \leq q \leq m-r$). 

We denote the multi-index `` $i_1 \cdots i_r$'' by a letter $I$ 
expressing the set $\{ i_1,\ldots ,i_r \}$. 
Note that the solution (\ref{series-sol}) for $r=0$ is 
$f(=f_{\emptyset})=F_{C} (a,b,c ;x)$. 

\begin{Prop}[Integral representation of Euler type, 
Example 3.1 in \cite{AK}] \label{int-Euler}
For sufficiently small positive real numbers $x_1 ,\ldots ,x_m$, 
if $c_1 ,\ldots ,c_m ,a-\sum c_k \not\in \Z$, 
then $F_C (a,b,c;x)$ admits the following integral representation: 
\begin{align}
& F_{C} (a,b,c_{1} ,\ldots ,c_{m} ;x_{1} ,\ldots ,x_{n} ) \label{integral} \\
& =\frac{\Ga (1-a)}{\prod \Ga (1-c_{k})\cdot \Ga (\sum c_{k} -a-m+1)} \nonumber \\
& \ \ \cdot \int _{\De } \prod t_{k} ^{-c_{k} } \cdot (1-\sum t_{k} )^{\sum c_{k} -a-m} 
	\cdot \left( 1-\sum \frac{x_{k}}{t_{k}} \right) ^{-b} dt_{1} \wedge \cdots \wedge dt_{m} , 
         \nonumber
\end{align}
where $\De $ is the twisted cycle made by an $m$-simplex in 
Sections 3.2 and 3.3 of \cite{AK}. 
\end{Prop}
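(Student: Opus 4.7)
The plan is to expand the factor $\left(1-\sum x_{k}/t_{k}\right)^{-b}$ as a convergent power series in $x_{1},\ldots ,x_{m}$, interchange sum and integral, and then evaluate each of the resulting term integrals over the twisted simplex $\De$ by a Dirichlet-type formula. After rewriting the gamma functions in terms of Pochhammer symbols, the right-hand side should reduce to the defining series of $F_{C}(a,b,c;x)$.

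The first step is the multinomial binomial expansion
\begin{align*}
\left(1-\sum \frac{x_{k}}{t_{k}}\right)^{-b}
=\sum_{n_{1},\ldots ,n_{m}=0}^{\infty }\frac{(b,\sum n_{k})}{\prod n_{k}!}\prod \frac{x_{k}^{n_{k}}}{t_{k}^{n_{k}}},
\end{align*}
which converges uniformly on (a regularized neighborhood of) the support of $\De$ provided the $x_{k}$ are sufficiently small positive reals. Substituting termwise reduces the problem to evaluating, for each $(n_{1},\ldots ,n_{m})\in \N^{m}$, the twisted Dirichlet integral
\begin{align*}
I(n)=\int_{\De}\prod t_{k}^{-c_{k}-n_{k}}\cdot \left(1-\sum t_{k}\right)^{\sum c_{k}-a-m}\, dt_{1}\wedge \cdots \wedge dt_{m}.
\end{align*}
By the regularization construction of Sections 3.2--3.3 of \cite{AK}, the cycle $\De$ over the $m$-simplex is designed precisely so that $I(n)$ equals the meromorphic continuation of the classical Dirichlet value
\begin{align*}
I(n)=\frac{\prod \Ga(1-c_{k}-n_{k})\cdot \Ga(\sum c_{k}-a-m+1)}{\Ga(1-a-\sum n_{k})}.
\end{align*}

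The remaining steps are algebraic. Using the identity $\Ga(z-n)=(-1)^{n}\Ga(z)/(1-z,n)$, one converts $\Ga(1-c_{k}-n_{k})=(-1)^{n_{k}}\Ga(1-c_{k})/(c_{k},n_{k})$ and $\Ga(1-a-\sum n_{k})=(-1)^{\sum n_{k}}\Ga(1-a)/(a,\sum n_{k})$; the sign factors cancel, the $n$-independent gammas come out as the overall prefactor stated in the proposition, and the remaining sum reassembles into $F_{C}(a,b,c;x)$ via Notation \ref{k}. The main obstacle I expect is Step 2: the unregularized Dirichlet integral only converges under positivity conditions on $\re(1-c_{k})$ and $\re(\sum c_{k}-a-m+1)$, so a direct Fubini--Tonelli argument is insufficient. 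The hypotheses $c_{k},\,a-\sum c_{k}\not\in \Z$ are exactly what guarantees that $\De$ is a non-trivial twisted homology class and that the regularized integral agrees with the analytic continuation of the classical Dirichlet formula; everything else (uniform convergence of the power series for small $x_{k}$, termwise interchange with the regularized integral, and Pochhammer-symbol bookkeeping) is routine.
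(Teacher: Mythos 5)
Your proof is correct and follows essentially the same route the paper itself takes: the proposition is quoted from \cite{AK} without proof, but the paper's own proof of its generalization (Proposition \ref{series-cycle}, specialized to $r=0$) proceeds exactly as you propose --- termwise expansion of $(1-\sum x_k/t_k)^{-b}$, uniform convergence on the compact support of the regularized cycle, evaluation of each term as a regularized Dirichlet integral equal to the analytic continuation of the classical beta-type value, and Pochhammer bookkeeping via $\Ga(z)\Ga(1-z)=\pi/\sin(\pi z)$. Note that your computation correctly produces the prefactor $\Ga(\sum c_k-a-m+1)$, consistent with Proposition \ref{series-cycle}; the exponent $-m-1$ in the displayed statement appears to be a typo.
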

In fact, this cycle is one of twisted cycles constructed in Section \ref{section-cycle}. 

\section{Twisted homology groups}\label{section-THG}
We review twisted homology groups and the intersection form 
between twisted homology groups in general situations, 
by referring to Chapter 2 of \cite{AK} and 
Chapters IV, VIII of \cite{KY}. 

For polynomials $P_j(t)=P_j(t_1 ,\ldots ,t_m ) \ (1 \leq j \leq n)$, 
we set $D_j :=\{ t \mid  P_j(t)=0 \} \subset \C^m$ and $M:=\C^m -(D_1 \cup \cdots \cup D_n)$.
We consider a multi-valued function $u(t)$ on $M$ defined as 
$$
u(t):=\prod _{j=1}^{n} P_j(t)^{\la _j} ,\ \la_j \in \C -\Z \ (1 \leq j \leq n).
$$
Let $\si$ be a $k$-simplex in $M$, we define a loaded $k$-simplex $\si \otimes u$ by
$\si$ loading a branch of $u$ on it. 
We denote the $\C$-vector space of finite sums of loaded $k$-simplexes by $\CC_k (M,u)$, 
called the $k$-th twisted chain group. 
An element of $\CC_k(M,u)$ is called a twisted $k$-chain. 
For a loaded $k$-simplex $\si \otimes u$ and a smooth $k$-form $\vph$ on $M$, 
the integral $\int_{\si \otimes u} u \cdot \vph$ is defined by 
$$
\int_{\si \otimes u} u \cdot \vph :=
\int_{\si} \left[ {\rm the \ fixed \ branch} \ {\rm of} \ u \ {\rm on} \ \si \right] \cdot \vph .
$$
By the linear extension of this, we define the integral on a twisted $k$-chain.  

We define the boundary operator $\pa^u :\CC_k (M,u) \to \CC_{k-1} (M,u)$ by
$$
\pa^u (\si \otimes u):= \pa(\si) \otimes u|_{\pa(\si)} , 
$$
where $\pa$ is the usual boundary operator and 
$u|_{\pa(\si)}$ is the restriction of $u$ to $\pa (\si)$. 
It is easy to see that $\pa^u \circ \pa^u =0$. 
Thus we have a complex 
$$
\CC_{\bu} (M,u) :\cdots 
\overset{\pa^u}{\longrightarrow} \CC_k (M,u)
\overset{\pa^u}{\longrightarrow} \CC_{k-1}(M,u)
\overset{\pa^u}{\longrightarrow} \cdots ,
$$
and its $k$-th homology group $H_k(\CC_{\bu} (M,u))$. 
It is called the $k$-th twisted homology group. 
An element of $\ker \pa^u$ is called a twisted cycle.

By considering $u^{-1} =1/u$ instead of $u$, we have $H_k(\CC_{\bu} (M,u^{-1}))$. 
There is the intersection pairing $I_h$ between $H_m(\CC_{\bu} (M,u))$ and $H_m(\CC_{\bu} (M,u^{-1}))$
(in fact, the intersection pairing is defined between $H_k(\CC_{\bu} (M,u))$ 
and $H_{2m-k}(\CC_{\bu} (M,u^{-1}))$, however we do not consider the cases $k \neq m$). 
Let $\De$ and $\De'$ be elements of $H_m(\CC_{\bu} (M,u))$ and $H_m(\CC_{\bu} (M,u^{-1}))$ given by 
twisted cycles $\sum_i \al_i \cdot \si_i \otimes u_i$ and $\sum_j \al'_j \cdot \si'_j \otimes u_j ^{-1}$ 
respectively,  
where $u_i$ (resp. $u_j ^{-1}$) is a branch of $u$ (resp. $u^{-1}$) on $\si_i$ (resp. $\si'_j$). 
Then their intersection number is defined by 
$$
I_h (\De ,\De'):=\sum_{i,j} \sum_{s \in \si_i \cap \si'_j} \al_i \al'_j 
\cdot (\si_i \cdot \si'_j)_s \cdot \frac{u_i (s)}{u_j(s)} ,
$$
where $(\si_i \cdot \si'_j)_s$ is the topological intersection number of 
$m$-simplexes $\si_i$ and $\si'_j$ at $s$. 

~

In this paper, we mainly consider 
$$
M:=\C ^{m} -\left( H_{1} \cup \cdots \cup H_{m} \cup H \cup D \right) ,
$$
where 
\begin{align*}
  & H_k :=(t_k =0) \ (1 \leq k \leq m),\ H:=\left( v =0\right) ,\ 
  D:=\left( w =0 \right) ,\\
  & v:=1-\sum t_k ,\ w:= \prod t_k \cdot (1-\sum \frac{x_k}{t_k}) .
\end{align*}
Note that $w$ is a polynomial in $t_1 ,\ldots ,t_m$. 
We consider the twisted homology group on $M$ with respect to the multi-valued function 
\begin{align*}
u :=& \prod t_k ^{1-c_k +b} \cdot v^{\sum c_{k} -a-m+1} w ^{-b} \\
=& \prod t_k ^{1-c_k } \cdot (1-\sum t_{k} )^{\sum c_{k} -a-m+1} 
\cdot \left( 1-\sum \frac{x_{k}}{t_{k}} \right) ^{-b} 
\end{align*}
(the second equality holds under the coordination of branches). 
Proposition \ref{int-Euler} means that the integral 
$$
\int_{\De} u \vph ,\ \ 
\vph  :=\frac{dt_{1} \wedge \cdots \wedge dt_{m} }{\prod t_k \cdot (1-\sum t_{k} )} 
$$
represents $F_C(a,b,c;x)$ modulo Gamma factors.

\section{Twisted cycles corresponding to local solutions 
$f_{i_1 \cdots i_r}$}\label{section-cycle}
In this section, we construct $2^m$ twisted cycles in $M$ corresponding to 
the solutions (\ref{series-sol}) to $E_C (a,b,c)$.

Let $0 \leq r \leq m$ and subsets $\{ i_1 ,\ldots ,i_r \}$ and $\{ j_1 ,\ldots ,j_{m-r} \}$ 
of $\{ 1,\ldots ,m \}$ satisfy 
$i_1 <\cdots <i_r ,\ j_{1} <\cdots <j_{m-r}$ and 
$\{ i_{1} ,\ldots ,i_{r} ,j_{1} ,\ldots ,j_{m-r} \} =\{1,\ldots ,m \}$. 
\begin{Nota}
From now on, the letter $p$ (resp. $q$) is always stands for an index running 
from $1$ to $r$ (resp. from $1$ to $m-r$). 
We use the abbreviations $\sum ,\ \prod$ for the indices 
$p,q$ as are mentioned in Notation \ref{k}. 
\end{Nota}

We set 
\begin{align*}
  M _{i_1 \cdots i_r} =\C ^m 
  -\left( \bigcup _{k} (s_k =0) \cup (v_{i_1 \cdots i_r} =0) \cup (w_{i_1 \cdots i_r} =0) \right) ,
\end{align*}
where 
\begin{align*}
  v_{i_1 \cdots i_r} :=\prod_p s_{i_p} \! \cdot (1-\! \sum_p \frac{x_{i_p}}{s_{i_p}}-\! \sum_q s_{j_q}) , \ 
  w_{i_1 \cdots i_r} :=\prod_q s_{j_q} \! \cdot (1-\! \sum_p s_{i_p}-\! \sum_q \frac{x_{j_q}}{s_{j_q}})   
\end{align*}
are polynomials in $s_1 ,\ldots ,s_m$. 
Let $u_{i_1 \cdots i_r}$ and $\vph_{i_1 \cdots i_r}$ be 
a multi-valued function and an $m$-form on $M_{i_1 \cdots i_r}$ defined as 
\begin{align*}
u_{i_{1} \cdots i_{r}} :=\prod _{k} s_{k} ^{C_{k}} 
\cdot v_{i_1 \cdots i_r} ^{A} \cdot w_{i_1 \cdots i_r} ^{B} , \quad 
\vph _{i_1 \cdots i_r} :=\frac{ds_1 \wedge \cdots \wedge ds_m}{s_1 \cdots s_m v_{i_1 \cdots i_r}} , 
\end{align*}
where
\begin{align*}
  A:=\sum c_{k} -a-m+1,\ B:=-b,  \ 
  C_{i_p} :=c_{i_p} -1 -A ,\ C_{j_q} :=1-c_{j_q} -B .
\end{align*}
We construct a twisted cycle $\tilde{\De }_{i_1 \cdots i_r}$ in $M_{i_1 \cdots i_r}$ 
with respect to $u_{i_1 \cdots i_r}$. 
Note that if $\{ i_1 ,\ldots ,i_r \} =\emptyset$, then these settings coincide  
with those in the end of Section \ref{section-THG}. 
We choose positive real numbers $\vep _{1} ,\ldots ,\vep _{m}$ and $\vep$ so that 
$\vep <1-\sum_{k} \vep _k$. 
And let $x_1 ,\ldots ,x_m$ be small positive real numbers satisfying 
$$
x_{k} <\frac{\vep _{k}}{m} \vep 
$$
(for example, if 
$$
\vep _{k} =\vep =\frac{1}{3m} ,\ 0<x_{k} <\frac{1}{9m^3} ,
$$
these conditions hold). 
Thus the closed subset 
\begin{align*}
  \sigma_{i_1 \cdots i_r} :=\left\{ (s_1 ,\ldots ,s_m )\in \R^m \ \Bigg| \ 
    \begin{array}{l}
      s_{i_p} \geq \vep_{i_p} ,\ 1-\sum s_{i_p} \geq \vep ,\\
      s_{j_q} \geq \vep_{j_q} ,\ 1-\sum s_{j_q} \geq \vep
    \end{array}
  \right\} 
\end{align*}
is nonempty, since we have
$(\vep_1 \! +\! \frac{\de}{2m},\ldots ,\vep_m \! +\! \frac{\de}{2m}) \in \sigma_{i_1 \cdots i_r}$, 
where $\de :=1  - \sum \vep_k - \vep >0$. 
Further, $\sigma_{i_1 \cdots i_r}$ is contained in the bounded domain 
$$
\left\{ (s_1 ,\ldots ,s_m )\in \R^m \ \Bigg| \ s_k >0 ,\ 
    \begin{array}{l}
      1-\sum \frac{x_{i_p}}{s_{i_p}} -\sum s_{j_q} >0 ,\\
      1-\sum s_{i_p} -\sum \frac{x_{j_q}}{s_{j_q}} >0
    \end{array}
  \right\} 
\subset (0,1)^m ,
$$
and is a direct product of an $r$-simplex and an $(m-r)$-simplex. 
Indeed, $(s_{1} ,\ldots ,s_{m}) \in \sigma _{i_{1} \cdots i_{r}} $ satisfies 
\begin{align*}
1-\sum \frac{x_{i_p}}{s_{i_p}} -\sum s_{j_q}  &> 
1-\frac{r}{m} \vep -\sum s_{j_q} > 1-\sum s_{j_q} -\vep \geq 0, \\
1-\sum s_{i_p} -\sum \frac{x_{j_q}}{s_{j_q}} &> 
1-\sum s_{i_p} -\frac{m-r}{m} \vep > 1-\sum s_{i_p} -\vep \geq 0 .
\end{align*}
The orientation of $\sigma _{i_{1} \cdots i_{r}} $ 
is induced from the natural embedding $\R^m \subset \C^m$. 
We construct a twisted cycle from $\sigma _{i_{1} \cdots i_{r}}$. 
We may assume that $\vep _{k} =\vep$ (the above example satisfies this condition), and denote them by $\vep$. 
Set $L_{1} :=(s_1 =0),\ldots ,\ L_{m} :=(s_m =0) ,\ L_{m+1} :=(1-\sum s_{i_p}=0),\ L_{m+2} :=(1-\sum s_{j_q} =0)$, 
and let $U(\subset \R ^m )$ be the bounded chamber surrounded by $L_{1} ,\ldots ,\ L_{m} ,\ L_{m+1} ,\ L_{m+2}$, 
then $\sigma _{i_{1} \cdots i_{r}}$ is contained in $U$. 
Note that we do not consider the hyperplane $L_{m+1}$ (resp. $L_{m+2}$), when $r=0$ (resp. $r=m$). 
For $J\subset \{ 1, \ldots ,m+2 \}$, we consider $L_{J} :=\cap _{j\in J} L_{j} ,\ U_{J} :=\overline{U} \cap L_{J}$ 
and $T_J :=\vep $-neighborhood of $U_J$. 
Then we have 
$$
\sigma _{i_{1} \cdots i_{r}} =U-\bigcup _{J} T_{J} .
$$ 
Using these neighborhoods $T_J$, we can construct a twisted cycle $\tilde{\De} _{i_{1} \cdots i_{r}}$ 
in the same manner as Section 3.2.4 of \cite{AK} 
(notations $L$ and $U$ correspond to $H$ and $\De$ in \cite{AK}, respectively). 
Note that we have to consider contribution of branches of
$\DS s_{i_p} ^{C_{i_p}} \cdot v_{i_1 \cdots i_r}^{A}$, 
when we deal with the circle associated to $L_{i_p} \ (p=1,\dots ,r)$. 
Indeed, for fixed positive real numbers $s_k \ (k\neq i_p )$, 
$s_{i_p}$ satisfying $1-\sum \frac{x_{i_{p}}}{s_{i_{p}}} -\sum s_{j_{q}} =0$ belongs to $\R$ and we have 
\begin{align*}
s_{i_{p}} =\frac{x_{i_p}}{1-\sum _{q} s_{j_q} -\sum _{p'\neq p} \frac{x_{i_{p'}}}{s_{i_{p'}}}}
<\frac{\frac{\vep}{m} \vep }{\vep-(r-1) \frac{\vep}{m}} =\frac{\vep}{m-r+1}<\vep .
\end{align*}
Thus the exponent about this contribution is 
$$
C_{i_p} +A=c_{i_p} -1.
$$
The exponents about the contributions 
of the circles associated to $L_{j_q} ,\ L_{m+1} ,\ L_{m+2}$ are also evaluated as 
$$
C_{j_q} +B=1-c_{j_q} ,\ B=-b,\ A=\sum c_k -a-m+1, 
$$ 
respectively. 
We briefly explain the expression of $\tilde{\De}_{i_1 \cdots i_r}$. 
For $j=1,\ldots ,m+2$, let $l_j$ be the $(m-1)$-face of $\si_{i_1 \cdots i_r}$ 
given by $\si_{i_1 \cdots i_r} \cap \overline{T_j}$, 
and let $S_j$ be a positively oriented circle with radius $\vep$ 
in the orthogonal complement of $L_j$ starting from the projection of 
$l_j$ to this space and surrounding $L_j$. 
Then $\tilde{\De} _{i_{1} \cdots i_{r}}$ is written as
\begin{align*}
  \sigma _{i_{1} \cdots i_{r}} 
  +\sum _{ \emptyset \neq J \subset \{ 1,\ldots , m+2\}} 
  \ \prod_{j\in J} \frac{1}{d_j} 
  \cdot \left( \Biggl( \bigcap_{j\in J} l_j \Biggr) \times \prod_{j\in J} S_j \right) ,
\end{align*}
where 
\begin{align*}
  d_{i_p} :=\ga_{i_p} -1,\ d_{j_q} :=\ga_{j_q} ^{-1} -1 ,\ 
  d_{m+1} :=\be^{-1} -1,\ d_{m+2} :=\al^{-1} \prod \ga_k -1, 
\end{align*}
and $\al :=e^{2\pi \sqrt{-1}a} ,\ \be :=e^{2\pi \sqrt{-1}b},\ \ga_k :=e^{2\pi \sqrt{-1}c_k}$. 
Note that we define an appropriate orientation for each 
$(\cap_{j\in J} l_j)\times \prod_{j\in J} S_j$, 
see Section 3.2.4 of \cite{AK} for details.

\begin{Ex}
  We give explicit forms of $\tilde{\De}$ and $\tilde{\De}_1$, for $m=2$. 
  \begin{enumerate}[(i)]
  \item In the case of $I=\emptyset$ ($r=0$), we have 
    \begin{align*}
      \tilde{\De} =&\si +\frac{S_1 \times l_1 }{1-\ga_1 ^{-1}} 
      +\frac{S_2 \times l_2}{1-\ga_2 ^{-1}}  
      +\frac{S_4 \times l_4}{1-\al^{-1} \ga_1 \ga_2}  \\
      &+\frac{S_1 \times S_2}{(1\! - \!\ga_1 ^{-1})(1\! -\! \ga_2 ^{-1})} 
      +\frac{S_2 \times S_4}{(1\! -\! \ga_2 ^{-1})(1\! -\! \al^{-1} \ga_1 \ga_2)}   
      +\frac{S_4 \times S_1}{(1\! -\! \al^{-1} \ga_1 \ga_2 )(1\! -\! \ga_1 ^{-1})} , 
    \end{align*}
    where the $1$-chains $l_j$ satisfy $\pa \si =l_1 +l_2 +l_4$ 
    (see Figure \ref{fig1}), 
    and the orientation of each direct product is induced 
    from those of its components. 
    Note that the face $l_3$ does not appear in this case. 
    \begin{figure}[h]
      \centering{
      \includegraphics[scale=1.0]{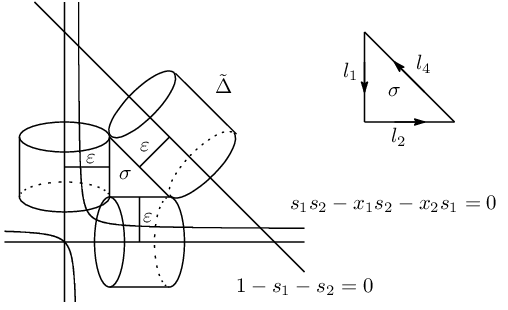} }
      \caption{$\tilde{\De}$ for $m=2$. \label{fig1}}
    \end{figure}
  \item In the case of $I=\{ 1 \}$, we have 
    \begin{align*}
      \tilde{\De}_1 =&\si_1 +\frac{S_1 \times l_1 }{1-\ga_1} 
      +\frac{S_2 \times l_2}{1-\ga_2^{-1}}  
      +\frac{S_3 \times l_3}{1-\be^{-1}} 
      +\frac{S_4 \times l_4 }{1-\al^{-1}\ga_1 \ga_2} \\
      &+\frac{S_1 \times S_2}{(1-\ga_1)(1-\ga_2^{-1})} 
      +\frac{S_2 \times S_3}{(1-\ga_2^{-1})(1-\be^{-1})} \\   
      &+\frac{S_3 \times S_4}{(1-\be^{-1})(1-\al^{-1}\ga_1 \ga_2)}
      +\frac{S_4 \times S_1}{(1-\al^{-1}\ga_1 \ga_2)(1-\ga_1)} , 
    \end{align*}
    where the $1$-chains $l_j$ satisfy $\pa \si =\sum_{j=1}^4 l_j$ 
    (see Figure \ref{fig2}), 
    and the orientation of each direct product is induced 
    from those of its components. 
    \begin{figure}[h]
      \centering{
      \includegraphics[scale=1.0]{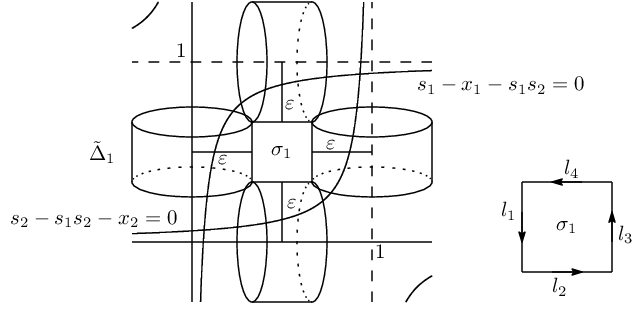} }
      \caption{$\tilde{\De}_1$ for $m=2$. \label{fig2}}
    \end{figure}
  \end{enumerate}
\end{Ex}

We consider the following integrals: 
\begin{align*}
& F _{i_{1} \cdots i_{r}} := 
\int _{\tilde{\De }_{i_{1} \cdots i_{r}}} u_{i_1 \cdots i_r} \vph_{i_1 \cdots i_r} \\ 
&= \int _{\tilde{\De }_{i_{1} \cdots i_{r}}}
\prod _{p=1} ^{r} s_{i_{p}} ^{c_{i_{p}} -2} \cdot \prod _{q=1} ^{m-r} s_{j_{q}} ^{-c_{j_{q}}} 
\cdot \left( 1-\sum _{p=1} ^{r} \frac{x_{i_{p}}}{s_{i_{p}}} -\sum _{q=1} ^{m-r} s_{j_{q}}  \right) ^{\sum c_{k} -a-m} 
\\ & \hspace{20mm} \cdot 
\left(  1-\sum _{p=1} ^{r}  s_{i_{p}} -\sum _{q=1} ^{m-r}\frac{x_{j_{q}}}{s_{j_{q}}}  \right) ^{-b} 
ds_1 \wedge \cdots \wedge ds_m .
\end{align*}

\begin{Prop}\label{series-cycle}
\begin{align*}
F _{i_{1} \cdots i_{r}} 
=& \prod _{p}  \Ga (c_{i_{p}} -1) \! \cdot \! \prod _{q}  \Ga (1-c_{j_{q}} ) 
 \! \cdot \! \frac{\Ga (\sum c_{k} -a-m+1) \Ga (1-b)}{\Ga (\sum c_{i_{p}} -a-r+1) \Ga (\sum c_{i_{p}} -b-r+1)} \\
& \cdot F_{C} \left( a+r-\sum _{p=1} ^{r} c_{i_{p}} ,b+r-\sum _{p=1} ^{r} c_{i_{p}} ,c^{i_1 \cdots i_r} ;x \right) .
\end{align*}
\end{Prop}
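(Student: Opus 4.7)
The plan is to compute $F_{i_1\cdots i_r}$ by expanding the integrand as a power series in $x_1,\ldots,x_m$, integrating each term by exploiting the direct product structure of $\tilde{\De}_{i_1\cdots i_r}$, and identifying the resulting series as an $F_C$ with shifted parameters.

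Introduce the auxiliary polynomials $V_0:=1-\sum_q s_{j_q}$ and $W_0:=1-\sum_p s_{i_p}$, so that the nonlinear factors decompose as $V:=1-\sum_p x_{i_p}/s_{i_p}-\sum_q s_{j_q}=V_0-\sum_p x_{i_p}/s_{i_p}$ and $W:=1-\sum_p s_{i_p}-\sum_q x_{j_q}/s_{j_q}=W_0-\sum_q x_{j_q}/s_{j_q}$, with $V_0$ depending only on the $s_{j_q}$ and $W_0$ only on the $s_{i_p}$. Since the $x_k$ are small, the multinomial binomial series yields
\[
V^{\sum c_k-a-m}=V_0^{\sum c_k-a-m}\sum_{n_{i_p}\geq 0}\frac{(a+m-\sum c_k,\sum_p n_{i_p})}{\prod_p n_{i_p}!}\,V_0^{-\sum n_{i_p}}\prod_p\!\left(\frac{x_{i_p}}{s_{i_p}}\right)^{\!n_{i_p}},
\]
and analogously for $W^{-b}$, contributing the Pochhammer factor $(b,\sum_q n_{j_q})/\prod_q n_{j_q}!$. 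After substitution, the $x^n$-coefficient of the integrand is an $(s_{i_p})$-piece $\prod_p s_{i_p}^{c_{i_p}-2-n_{i_p}}\,W_0^{-b-\sum n_{j_q}}$ times an $(s_{j_q})$-piece $\prod_q s_{j_q}^{-c_{j_q}-n_{j_q}}\,V_0^{\sum c_k-a-m-\sum n_{i_p}}$.

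The chamber $\si_{i_1\cdots i_r}$ is a direct product of an $r$-simplex in the $s_{i_p}$ and an $(m-r)$-simplex in the $s_{j_q}$, and the Aomoto--Kita regularisation of Section 3.2.4 of \cite{AK} respects this product, so $\tilde{\De}_{i_1\cdots i_r}$ itself splits as a direct product of two twisted simplicial cycles. The integral of the factorised integrand is then a product of two twisted Dirichlet integrals, each evaluated by the complex beta formula
\[
\int_{\tilde{\De}_{\mathrm{simpl}}}\prod_k t_k^{\al_k-1}(1-\sum_k t_k)^{\al_0-1}\,dt=\frac{\prod_k\Ga(\al_k)\,\Ga(\al_0)}{\Ga(\al_0+\sum_k\al_k)}.
\]
This produces $\prod_p\Ga(c_{i_p}-1-n_{i_p})\Ga(1-b-\sum n_{j_q})/\Ga(\sum c_{i_p}-b-r+1-\sum n)$ on the $(s_{i_p})$-factor and $\prod_q\Ga(1-c_{j_q}-n_{j_q})\Ga(\sum c_k-a-m+1-\sum n_{i_p})/\Ga(\sum c_{i_p}-a-r+1-\sum n)$ on the $(s_{j_q})$-factor, where $\sum n:=\sum_p n_{i_p}+\sum_q n_{j_q}$.

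To conclude, apply $\Ga(\al-n)=(-1)^n\Ga(\al)/(1-\al,n)$ to each shifted Gamma. The Pochhammer symbols $(a+m-\sum c_k,\sum n_{i_p})$ and $(b,\sum n_{j_q})$ introduced by the binomial expansion cancel exactly against those arising from $\Ga(\sum c_k-a-m+1-\sum n_{i_p})$ and $\Ga(1-b-\sum n_{j_q})$; the factors $\Ga(c_{i_p}-1-n_{i_p})$ and $\Ga(1-c_{j_q}-n_{j_q})$ become the Pochhammer entries $(2-c_{i_p},n_{i_p})^{-1}$ and $(c_{j_q},n_{j_q})^{-1}$ prescribed by $c^{i_1\cdots i_r}$; and the denominators $\Ga(\sum c_{i_p}-a-r+1-\sum n)$, $\Ga(\sum c_{i_p}-b-r+1-\sum n)$ supply $(a+r-\sum c_{i_p},\sum n)$, $(b+r-\sum c_{i_p},\sum n)$ in the numerator, with all accumulated signs $(-1)^{\sum n}$ pairing off. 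What remains is exactly $F_C(a+r-\sum c_{i_p},b+r-\sum c_{i_p},c^{i_1\cdots i_r};x)$ times the claimed Gamma-function prefactor. The main technical step is the factorisation of $\tilde{\De}_{i_1\cdots i_r}$ as a direct product of twisted simplicial cycles for which the complex beta formula applies with no residual $\prod 1/d_j$ correction; this is possible because the hyperplanes $L_{m+1}$ and $L_{m+2}$ involve disjoint sets of coordinates, so the corresponding Aomoto--Kita regularising tubes $T_J$ decouple.
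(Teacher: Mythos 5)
Your proof is correct and takes essentially the same route as the paper's: expand the two non-linear factors binomially in the $x_k$, use the splitting of $\tilde{\De}_{i_1\cdots i_r}$ into a product of two regularized simplices to evaluate each $x^n$-coefficient by the (twisted) beta integral, and convert the resulting Gamma quotients into the Pochhammer symbols of $F_C(a+r-\sum c_{i_p},\,b+r-\sum c_{i_p},\,c^{i_1\cdots i_r};x)$. The only details the paper adds are an explicit uniform-convergence estimate on the cycle (using $x_k<\vep_k\vep/m$, $|s_{i_p}|\geq\vep_{i_p}$, $|1-\sum_q s_{j_q}|\geq\vep$) to justify the term-by-term integration, and a packaging of your final Pochhammer manipulations as the constancy of the ratio $B_{n_1\cdots n_m}/A_{n_1\cdots n_m}$ via the reflection formula (\ref{Gamma}).
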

\begin{proof}
We compare the power series expansions of the both sides. 
Note that the coefficient of $x_1 ^{n_1} \cdots x_m ^{n_m}$ in the series expression of 
$F_{C} (a+r-\sum _{p} c_{i_{p}} ,b+r-\sum _{p} c_{i_{p}} ,c^{i_1 \cdots i_r} ;x)$ is 
\begin{align*}
A_{n_1 \dots n_m} :=& \frac{\Ga (a+r-\sum_p c_{i_{p}} +\sum_k n_k )}{\Ga (a+r-\sum_p c_{i_{p}} )}
\cdot \frac{\Ga (b+r-\sum_p c_{i_{p}} +\sum_k n_k )}{\Ga (b+r-\sum_p c_{i_{p}} )} \\
& \cdot \prod_p \frac{\Ga (2-c_{i_p})}{\Ga (2-c_{i_p} +n_{i_p})} \cdot \prod_q \frac{\Ga (c_{j_q})}{\Ga (c_{j_q} +n_{j_q})} 
\cdot \prod_k \frac{1}{n_k !}. 
\end{align*}
On the other hand, we have
\begin{align*}
& \left( 1-\sum _{p}  \frac{x_{i_{p}}}{s_{i_{p}}} -\sum _{q}  s_{j_{q}}  \right) ^{\sum c_{k} -a-m} \\
&= \! \! \! \sum _{n_{i_1} ,\dots ,n_{i_r}} \frac{\Ga (a\! -\! \sum_k c_{k} \! +\! m\! +\! \sum_p n_{i_p} )}
       {\Ga (a\! -\! \sum_k c_{k} \! +\! m) \cdot \prod_p n_{i_p} !} 
	\prod_p s_{i_p} ^{-n_{i_p}} \! \cdot \! 
        (1\! -\! \sum_q s_{j_q})^{\sum c_{k} \! -\! a\! -\! m\! -\! \sum n_{i_p}} 
        \! \cdot \! \prod_p x_{i_p} ^{n_{i_p}}
\end{align*}
and
\begin{align*}
& \left(  1-\sum _{p}   s_{i_{p}} -\sum _{q} \frac{x_{j_{q}}}{s_{j_{q}}}  \right) ^{-b} \\
&= \sum _{n_{j_1} ,\dots ,n_{j_{m-r}}} \frac{\Ga (b+\sum_q n_{j_q} )}{\Ga (b) \cdot \prod_q n_{j_q} !} 
	\prod_q s_{j_q} ^{-n_{j_q}} \cdot (1-\sum_p s_{i_p})^{-b-\sum n_{j_q}} \cdot \prod_q x_{j_q} ^{n_{j_q}}. 
\end{align*}
When $r=0$ (resp. $r=m$), we do not need the first (resp. second) expansion. 
The convergences of these power series expansions are verified as follows. We explain only the first one.
By the construction of $\tilde{\De }_{i_{1} \cdots i_{r}} $, we have 
$$
x_{k} <\frac{\vep _{k}}{m} \vep ,\ |s_{i_p}| \geq \vep_{i_p} ,\ 
\Big| 1-\sum_q s_{j_q} \Big| \geq \vep .
$$
Thus the uniform convergence on $\tilde{\De }_{i_{1} \cdots i_{r}} $ follows from 
\begin{align*}
  \Bigg| \frac{1}{1-\sum_q s_{j_q}} \cdot \sum_p \frac{x_{i_p}}{s_{i_p}}   \Bigg| 
  \leq \frac{1}{|1-\sum_q s_{j_q}|} \cdot  \sum_p \frac{|x_{i_p}|}{|s_{i_p}|}   
  < \frac{1}{\vep} \cdot \sum_p \frac{1}{\vep_{i_p}} \frac{\vep_{i_p}}{m} \vep =\frac{r}{m}\leq 1.
\end{align*}
Since $\tilde{\De}_{i_1 \cdots i_r}$ is constructed as a finite sum of 
loaded (compact) simplexes, 
we can exchange the sum and the integral in the expression of $F_{i_1 \cdots i_r}$. 
Then the coefficient of $x_1 ^{n_1} \cdots x_m ^{n_m}$ in the series expansion of $F _{i_{1} \cdots i_{r}}$ is
\begin{align}
  B_{n_1 \dots n_m}:=& 
  \frac{\Ga (a-\sum c_{k} +m+\sum n_{i_p} )}{\Ga (a-\sum c_{k} +m) } 
	\cdot \frac{\Ga (b+\sum n_{j_q} )}{\Ga (b) } \cdot \prod_k \frac{1}{n_k !} \label{Bn} \\
  & \cdot \int _{\tilde{\De }_{i_{1} \cdots i_{r}}} 
        \prod_p s_{i_p} ^{c_{i_p} -n_{i_p} -2} \cdot (1-\sum_p s_{i_p})^{-b-\sum n_{j_q}} \nonumber \\ 
  & \hspace{10mm} \cdot \prod_q s_{j_q} ^{-c_{j_q} -n_{j_q}} 
        \cdot (1-\sum_q s_{j_q})^{\sum c_{k} -a-m-\sum n_{i_p}} 
        ds_1 \! \wedge \! \cdots \! \wedge \! ds_m . 
        \nonumber
\end{align}
By the construction, the twisted cycle $\tilde{\De }_{i_{1} \cdots i_{r}}$ of this integral is 
identified with the usual regularization of the domain 
\begin{align*}
  \left\{ (s_1 ,\ldots ,s_m )\in \R^m \ \Bigg| \ 
    \begin{array}{l}
      s_{i_p} >0 ,\ 1-\sum s_{i_p} >0 ,\\
      s_{j_q} >0 ,\ 1-\sum s_{j_q} >0
    \end{array}
  \right\} 
\end{align*}
for the multi-valued function 
$$
\prod_p s_{i_p} ^{c_{i_p} \! -\! n_{i_p} -1} (1-\sum s_{i_p})^{-\! b\! -\! \sum n_{j_q}+1} 
\cdot \prod_q s_{j_q} ^{-\! c_{j_q} \! -\! n_{j_q}+1} \cdot 
(1-\sum_q s_{j_q})^{\sum c_{k} \! -\! a\! -\! m\! -\! \sum n_{i_p}+1}
$$
on $\C^m -\left( \bigcup_k (s_k=0) \cup (1-\sum s_{i_p} =0) \cup (1- \sum s_{j_q} =0) \right)$. 
Hence the integral in (\ref{Bn}) is equal to
\begin{align*}
& \frac{\prod \Ga (c_{i_p}-n_{i_p}-1) \cdot \Ga (-b-\sum n_{j_q}+1)}{\Ga (-b+\sum c_{i_p} -\sum n_{k}-r+1)} \\
& \cdot \frac{\prod \Ga (-c_{j_q}-n_{j_q}+1) 
\cdot \Ga (\sum c_{k} -a-m-\sum n_{i_p}+1)}{\Ga (\sum c_{i_p} -a-\sum n_{k}-r+1)} .
\end{align*}
Using the formula 
\begin{align}
  \Ga (z) \Ga (1-z) =\frac{\pi}{\sin (\pi z)} , \label{Gamma}
\end{align}
we thus have 
\begin{align*}
  \frac{ B_{n_1 \dots n_m}}{A_{n_1 \dots n_m}} =&
  \prod _{p}  \Ga (c_{i_{p}} -1) \cdot \prod _{q}  \Ga (1-c_{j_{q}} ) \\
  & \cdot \frac{\Ga (\sum c_{k} -a-m+1) \Ga (1-b)}{\Ga (\sum c_{i_{p}} -a-r+1) \Ga (\sum c_{i_{p}} -b-r+1)},
\end{align*}
which implies the proposition. 
\end{proof}

We define a bijection $\iota _{i_{1} \cdots i_{r}} :M_{i_1 \cdots i_r} \rightarrow M$ by
$$
\iota _{i_{1} \cdots i_{r}} (s_{1} ,\ldots ,s_{m} ):=(t_{1} ,\ldots ,t_{m} );\ 
t_{i_{p}} =\frac{x_{i_{p}}}{s_{i_{p}}} ,\ t_{j_{q}} =s_{j_{q}} .
$$
For example, $\iota (=\iota _{\emptyset })$ is the identity map on $M=M_{\emptyset}$, and 
$\iota _{1 \cdots m}$ defines an involution on $M=M_{1\cdots m}$.
We state our first main theorem.

\begin{Th}\label{solutions-cycle}
We define a twisted cycle $\De _{i_1 \cdots i_r}$ in $M$ by 
\begin{align}
  \label{cycle-def}
  \De _{i_{1} \cdots i_{r}} :=(-1)^r (\iota _{i_1 \cdots i_r} )_{*} (\tilde{\De }_{i_1 \cdots i_r}) .
\end{align}
Then we have
\begin{align*}
& \int _{\De _{i_{1} \cdots i_{r}} } \prod t_{k} ^{-c_{k} } \cdot (1-\sum t_{k} )^{\sum c_{k} -a-m} 
	\cdot \left( 1-\sum \frac{x_{k}}{t_{k}} \right) ^{-b} dt_{1} \wedge \cdots \wedge dt_{m} \\
& \left( =\int_{\De_{i_1 \cdots i_r}} u \vph  \right)
=\prod _{p}  x_{i_{p}} ^{1-c_{i_{p}}} \cdot F _{i_{1} \cdots i_{r}}  ,
\end{align*}
and hence this integral corresponds to the local solution $f_{i_{1} \cdots i_{r}} $ to 
$E_{C} (a,b,c)$ given in Proposition \ref{solution}.
\end{Th}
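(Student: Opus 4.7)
The plan is to prove the identity by performing the change of variables $\iota_{i_1 \cdots i_r}$ and then invoking Proposition \ref{series-cycle}. The sign $(-1)^r$ and the cycle $\tilde{\De}_{i_1 \cdots i_r}$ in $\De_{i_1 \cdots i_r} = (-1)^r (\iota_{i_1 \cdots i_r})_* \tilde{\De}_{i_1 \cdots i_r}$ are rigged so that pulling back $u\,\vph$ along $\iota_{i_1 \cdots i_r}$ produces $\prod_p x_{i_p}^{1-c_{i_p}}$ times the integrand of $F_{i_1 \cdots i_r}$. Once this is verified, Proposition \ref{series-cycle} rewrites $F_{i_1 \cdots i_r}$ as a product of Gamma factors times $F_C(a+r-\sum_p c_{i_p}, b+r-\sum_p c_{i_p}, c^{i_1 \cdots i_r}; x)$, so that $\prod_p x_{i_p}^{1-c_{i_p}} \cdot F_{i_1 \cdots i_r}$ is a nonzero scalar multiple of $f_{i_1 \cdots i_r}$ from (\ref{series-sol}).

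I would carry out the pullback by direct substitution. Under $t_{i_p} = x_{i_p}/s_{i_p}$, $t_{j_q} = s_{j_q}$, one computes $\prod t_k^{-c_k} = \prod_p x_{i_p}^{-c_{i_p}} s_{i_p}^{c_{i_p}} \cdot \prod_q s_{j_q}^{-c_{j_q}}$; the factors $1 - \sum t_k$ and $1 - \sum x_k/t_k$ become, respectively, $1 - \sum_p x_{i_p}/s_{i_p} - \sum_q s_{j_q}$ and $1 - \sum_p s_{i_p} - \sum_q x_{j_q}/s_{j_q}$, which are precisely the bases of the last two factors in $F_{i_1 \cdots i_r}$; and the Jacobian $dt_1 \wedge \cdots \wedge dt_m = (-1)^r \prod_p (x_{i_p}/s_{i_p}^2) \, ds_1 \wedge \cdots \wedge ds_m$ contributes an extra $(-1)^r \prod_p x_{i_p}$ together with $\prod_p s_{i_p}^{-2}$. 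Collecting exponents, $\iota_{i_1 \cdots i_r}^*(u\,\vph) = (-1)^r \prod_p x_{i_p}^{1-c_{i_p}}$ times exactly the integrand displayed in the definition of $F_{i_1 \cdots i_r}$, and the $(-1)^r$ is cancelled by the sign in $\De_{i_1 \cdots i_r}$.

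The one subtle point I would need to address carefully is the coordination of branches on both sides of the substitution. On the real piece $\si_{i_1 \cdots i_r} \subset (0,1)^m$, the inequalities forced by the choice $x_k < \frac{\vep_k}{m}\vep$ guarantee that every base factor appearing in both $u$ and $u_{i_1 \cdots i_r}$ is positive real, so the principal branches coincide and yield $\iota_{i_1 \cdots i_r}^{*}(u) = \prod_p x_{i_p}^{1-c_{i_p}} \cdot u_{i_1 \cdots i_r}$ on $\si_{i_1 \cdots i_r}$. This identification then extends by holomorphic continuation to the $\vep$-neighborhoods and to the product of small circles $S_j$ used to regularize the boundary of $\si_{i_1 \cdots i_r}$, because $\iota_{i_1 \cdots i_r}$ is a holomorphic bijection on the relevant open set of $M_{i_1 \cdots i_r}$ onto its image in $M$, so that the local monodromies around the corresponding divisors match. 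Combined with the pullback calculation and Proposition \ref{series-cycle}, this yields the theorem.
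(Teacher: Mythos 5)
Your proposal is correct and follows essentially the same route as the paper's proof: pull back the integrand by $\iota_{i_1 \cdots i_r}$, observe that the Jacobian contributes $(-1)^r \prod_p x_{i_p}^{1-c_{i_p}} \prod_p s_{i_p}^{-2}$ so that the sign cancels the one in (\ref{cycle-def}) and the remaining integrand is exactly that of $F_{i_1 \cdots i_r}$, then invoke Proposition \ref{series-cycle}. Your additional care about coordinating branches on $\si_{i_1 \cdots i_r}$ and extending by analytic continuation is a point the paper leaves implicit, but it does not change the argument.
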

\begin{proof}
Pull-back the integral of the left hand side by $\iota _{i_{1} \cdots i_{r}} $. 
Indeed, we have
$$
t_{i_p} =\frac{x_{i_p}}{s_{i_p}} ,\ dt_{i_p}=-\frac{x_{i_p}}{s_{i_p} ^2}ds_{i_p} 
$$
by the definition of $\iota _{i_{1} \cdots i_{r}} $. 
Note that the sign $(-1)^r$ arising from the pull-back of $dt_1 \wedge \cdots \wedge dt_m$ 
is canceled by that in (\ref{cycle-def}). 
The second claim is followed from the first equality and Proposition \ref{series-cycle}. 
\end{proof}

\begin{Rem}\label{cycle-orientation}
  \begin{enumerate}[(i)]
  \item The sign $(-1)^r$ in (\ref{cycle-def}) 
    implies that the orientation of the direct product $\iota _{i_1 \cdots i_r} (\si _{i_1 \cdots i_r})$ 
    of two simplexes in $\De _{i_{1} \cdots i_{r}} $ is coincide with that of $\R^m$ 
    induced from the natural embedding $\R^m \subset \C^m$. 
  \item The twisted cycle $\De$ (for $r=0$) equals to that mentioned in Proposition \ref{int-Euler}. 
  \end{enumerate}
\end{Rem}

The replacement $u \mapsto u^{-1} =1/u$ and the construction same as $\De _{i_1 \cdots i_r}$ give 
the twisted cycle $\De_{i_1 \cdots i_r} ^{\vee}$ 
which represents an element in $H_m (\CC_{\bu} (M,u^{-1}))$. 
We obtain the intersection numbers of the twisted cycles $\{ \De _{i_1 \cdots i_r} \}$ and 
$\{ \De _{i_1 \cdots i_r}^{\vee} \}$. 

\begin{Th}\label{H-intersection}
  \begin{enumerate}[(i)]
  \item For $I,J \! \subset \! \{ 1,\ldots ,m \}$ such that $I \! \neq \! J$, 
    we have $I_h (\De_I ,\De_J ^{\vee}) \! =\! 0$. 
  \item The self-intersection number of $\De _{i_1 \cdots i_r}$ is 
  \begin{align*}
    I_{h} (\De _{i_{1} \cdots i_{r}} ,\De _{i_{1} \cdots i_{r}}^{\vee} )
    = (-1) ^{r} \cdot \frac{\DS \prod _{q}  \ga _{j_{q} } \cdot \left( \al -\prod _{p}  \ga _{i_{p}} \right) 
      \left( \be -\prod _{p}  \ga _{i_{p}} \right) }
    {\DS \prod _{k}  (\ga _{k} -1) \cdot \left( \al -\prod _{k}  \ga _{k} \right) (\be -1)} .
  \end{align*}
  \end{enumerate}
\end{Th}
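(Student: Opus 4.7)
The plan for (i) is to show that $\De_I$ and $\De_J^\vee$ have disjoint supports in $M$ whenever $I \neq J$, from which $I_h(\De_I,\De_J^\vee)=0$ follows immediately; this parallels the argument of Lemma 4.1 in \cite{GM}. The key estimate is that throughout $\tilde{\De}_I \subset M_I$, every coordinate satisfies $|s_k| \geq \vep$: on $\si_I$ this holds by definition, and on each attached circle $S_j$ only one coordinate is moved along an arc of radius $\vep$ in a manner that preserves this lower bound on every $|s_k|$ (face by face check). Pushing forward by $\iota_I$, we get $|t_k| = x_k/|s_k| \leq x_k/\vep$ for $k \in I$ and $|t_k| = |s_k| \geq \vep$ for $k \notin I$. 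Since $x_k < \vep^2/m$, the regimes $|t_k| < \vep$ and $|t_k| \geq \vep$ are strictly separated. For any $k$ in the symmetric difference $I \triangle J$, the corresponding bounds on $\De_I$ and on $\De_J^\vee$ are incompatible, so the supports do not meet.

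For (ii), I would first reduce the computation to $M_I$ via the biholomorphism $\iota_I$. Because $\iota_I$ is holomorphic (hence orientation-preserving on the underlying $2m$-real manifold), because the two $(-1)^r$ factors in the definitions of $\De_I$ and $\De_I^\vee$ cancel, and because $\iota_I^* u$ agrees with $u_I$ up to a constant multi-valued factor that cancels in every ratio $u_i/u_j$ appearing in the definition of $I_h$, we obtain
$$
I_h(\De_I,\De_I^\vee) \;=\; I_h(\tilde{\De}_I,\tilde{\De}_I^\vee).
$$
The cycle $\tilde{\De}_I$ is the regularization of the product $\si_I$ of an $r$-simplex in the $s_{i_p}$-coordinates (bounded by $L_{i_1},\ldots,L_{i_r},L_{m+1}$) and an $(m-r)$-simplex in the $s_{j_q}$-coordinates (bounded by $L_{j_1},\ldots,L_{j_{m-r}},L_{m+2}$). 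Near $\si_I$, the smallness of the $x_k$'s gives $v_{i_1\cdots i_r} \approx \prod_p s_{i_p}\cdot(1-\sum_q s_{j_q})$ and $w_{i_1\cdots i_r} \approx \prod_q s_{j_q}\cdot(1-\sum_p s_{i_p})$, so the exponent structure of $u_{i_1\cdots i_r}$ factors into a function of the $s_{i_p}$'s and a function of the $s_{j_q}$'s, with local exponents $c_{i_p}-1,\ 1-c_{j_q},\ -b,\ \sum c_k-a-m+1$ at the respective facets (as computed in Section \ref{section-cycle}). Since the intersection number depends only on this local data around $\si_I$, it factors as a product of Kita-Yoshida self-intersections: one for an $r$-simplex with facet monodromies $(\ga_{i_p};\be^{-1})$ and one for an $(m-r)$-simplex with facet monodromies $(\ga_{j_q}^{-1};\al^{-1}\prod\ga_k)$. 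Applying the explicit self-intersection formula from Chapter VIII of \cite{KY} to each factor and multiplying, using $\prod\ga_k = \prod_p\ga_{i_p}\cdot\prod_q\ga_{j_q}$, should yield the claimed expression.

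The main obstacle I anticipate is the sign and branch bookkeeping. The overall $(-1)^r$ in the final formula must assemble correctly from the Kita-Yoshida signs of the two simplex factors, the orientation of the product $r$-simplex $\times$ $(m-r)$-simplex relative to the induced orientation of $\R^m\subset \C^m$, and the branches loaded on each circle $S_j$. A second delicate point is justifying the local product factorization rigorously: the singular loci $\{v_{i_1\cdots i_r}=0\}$ and $\{w_{i_1\cdots i_r}=0\}$ are honest perturbations of the hyperplanes $L_{m+2}$ and $L_{m+1}$ that bound $\si_I$, and one must verify that these perturbations, being of order $x_k$, do not alter the local monodromies or the combinatorial structure of the regularized cycle near $\si_I$, so that the intersection-theoretic computation on the genuine product cycle applies.
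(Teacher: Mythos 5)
Your part (ii) is essentially the paper's proof: transport the computation to $M_{i_1\cdots i_r}$ via $\iota_{i_1\cdots i_r}$ (the two signs $(-1)^r$ cancel, and $\iota_{i_1\cdots i_r}^{*}u=\prod_p x_{i_p}^{1-c_{i_p}}\cdot u_{i_1\cdots i_r}$, with the constant cancelling against its reciprocal coming from $u^{-1}$), then factor the self-intersection number of $\tilde{\De}_{i_1\cdots i_r}$ as the product of the Kita--Yoshida self-intersection numbers of an $r$-simplex with facet monodromies $\ga_{i_1},\ldots,\ga_{i_r},\be^{-1}$ and an $(m-r)$-simplex with facet monodromies $\ga_{j_1}^{-1},\ldots,\ga_{j_{m-r}}^{-1},\al^{-1}\prod_k\ga_k$; simplifying with $\prod_k\ga_k=\prod_p\ga_{i_p}\prod_q\ga_{j_q}$ gives the stated formula, signs included. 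The ``delicate point'' you flag about $v_{i_1\cdots i_r}=0$ and $w_{i_1\cdots i_r}=0$ being perturbations of unions of bounding hyperplanes is precisely what the construction in Section \ref{section-cycle} already settles: the estimate $s_{i_p}<\vep/(m-r+1)<\vep$ shows that the relevant branch of $v_{i_1\cdots i_r}=0$ lies inside the circle $S_{i_p}$, so the effective exponent at that facet is $C_{i_p}+A=c_{i_p}-1$ (and similarly at the other facets), which is exactly what licenses treating the arrangement near $\si_{i_1\cdots i_r}$ as a normally crossing product of two simplex arrangements.

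For part (i) you take a genuinely different route. The paper does not argue disjointness of supports: it invokes Theorem \ref{solutions-cycle}, by which $\De_I$ represents the local solution with characteristic exponent $\prod_p x_{i_p}^{1-c_{i_p}}$ at $x=0$; the classes $\De_I$ and $\De_J^{\vee}$ are therefore eigenvectors of the local monodromy around $x_k=0$ with eigenvalues in $\{\ga_k^{\mp1},1\}$, and invariance of $I_h$ under monodromy gives $I_h(\De_I,\De_J^{\vee})=\ga_k^{\pm1}I_h(\De_I,\De_J^{\vee})$ for any $k$ in the symmetric difference, forcing the vanishing since $\ga_k\neq1$. Your geometric argument is more elementary and would also work, but the key estimate is not correct as stated: the circles $S_{m+1}$ and $S_{m+2}$ (around $1-\sum_p s_{i_p}=0$ and $1-\sum_q s_{j_q}=0$) displace \emph{several} coordinates simultaneously by amounts comparable to $\vep$, so the uniform bound $|s_k|\geq\vep$ can fail on tube pieces such as the one indexed by $J=\{i_p,m+1\}$ when $r$ is small; consequently the clean dichotomy $|t_k|<\vep$ versus $|t_k|\geq\vep$ is not yet established. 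This is repairable (shrink the radii of $S_{m+1},S_{m+2}$, or prove $|s_k|\geq c\vep$ for a constant $c$ with $x_k/(c\vep)<c\vep$), but the monodromy argument avoids this bookkeeping entirely, which is presumably why the paper uses it.
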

\begin{proof}
(i) Since $\De _{i_{1} \cdots i_{r}} $'s represent local solutions (\ref{series-sol}) 
to $E_{C} (a,b,c)$ by Theorem \ref{solutions-cycle}, 
this claim is followed from similar arguments to the proof of Lemma 4.1 in \cite{GM}. \\
(ii) By using $\iota _{i_{1} \cdots i_{r}}$, the self-intersection number of
$\De _{i_{1} \cdots i_{r}}$ is equal to that of $\tilde{\De }_{i_{1} \cdots i_{r}}$ 
with respect to the multi-valued function $u_{i_1 \cdots i_r}$. 
To calculate this, we apply results of M. Kita and M. Yoshida (see \cite{KY}). 
Since we construct the twisted cycle $\tilde{\De }_{i_{1} \cdots i_{r}}$ from 
the direct product $\sigma_{i_1 \cdots i_r}$ of two simplexes, 
the self-intersection number of $\tilde{\De }_{i_{1} \cdots i_{r}}$ 
is obtained as the product of those of simplexes. Thus we have
\begin{align*}
  I_{h} (\De _{i_{1} \cdots i_{r}} ,\De _{i_{1} \cdots i_{r}}^{\vee} )
  = \frac{1-\prod_{p}\ga_{i_p} \cdot \be^{-1}}{\prod_{p}(1\! -\! \ga_{i_p})\cdot (1\! -\! \be^{-1})}
  \cdot \frac{1-\prod_{q}\ga_{j_q}^{-1} \cdot \prod_{k}\ga_k \cdot \al^{-1}}
             {\prod_{q}(1\! -\! \ga_{j_q}^{-1})\cdot (1\! -\! \prod_{k}\ga_k \cdot \al^{-1})} .
\end{align*}
\end{proof}

\section{Intersection numbers of twisted cohomology groups}
In this section, we review twisted cohomology groups and 
the intersection form between twisted cohomology groups in our situation, 
and evaluate some self-intersection numbers of twisted cocycles. 

Recall that 
\begin{align*}
  & M=\C^m -\left( \bigcup_k (t_k =0) \cup (v=0) \cup (w=0) \right) , \\
  & u= \prod t_k ^{1-c_k +b} \cdot v^{\sum c_{k} -a-m+1} w ^{-b} .
\end{align*}
We consider the logarithmic $1$-form 
$$
\om :=d\log u =\frac{du}{u} .
$$
We denote the $\C$-vector space of smooth $k$-forms on $M$ by $\CE^k (M)$. 
We define the covariant differential operator
$\na_{\om} :\CE^k (M) \to \CE^{k+1} (M)$ by
$$
\na_{\om} (\psi):= d\psi +\om \wedge \psi ,\ \ \psi \in \CE^{k} (M).
$$
Because of $\na_{\om} \circ \na_{\om} =0$, we have a complex 
$$
\CE^{\bu} (M) :\cdots 
\overset{\na_{\om}}{\longrightarrow} \CE^k (M)
\overset{\na_{\om}}{\longrightarrow} \CE^{k+1}(M)
\overset{\na_{\om}}{\longrightarrow} \cdots ,
$$
and its $k$-th cohomology group $H^k(M,\na_{\om})$. 
It is called the $k$-th twisted de Rham cohomology group. 
An element of $\ker \na_{\om}$ is called a twisted cocycle.
By replacing $\CE^k (M)$ with the $\C$-vector space $\CE_c ^k (M)$ of 
smooth $k$-forms on $M$ with compact support, we obtain 
the twisted de Rham cohomology group 
$H_c ^k (M,\na_{\om})$ with compact support. 
By \cite{Cho}, we have $H^k(M,\na_{\om})=0$ for all $k \neq m$. 
Further, by Lemma 2.9 in \cite{AK}, 
there is a canonical isomorphism 
$$
\jmath : H^m (M,\na_{\om}) \to H_c ^m (M,\na_{\om}) .
$$ 
By considering $u^{-1} =1/u$ instead of $u$, we have 
the covariant differential operator $\na_{-\om}$ and 
the twisted de Rham cohomology group $H^k (M,\na_{-\om})$. 
The intersection form $I_c$ between $H^m (M,\na_{\om})$ and $H^m (M,\na_{-\om})$ 
is defined by 
$$
I_c (\psi ,\psi'):=\int_M \jmath (\psi) \wedge \psi' ,\quad 
\psi \in H^m(M,\na_{\om}) ,\ \psi' \in H^m (M,\na_{-\om}),
$$
which converges because of the compactness of the support of $\jmath (\psi)$.

By the Poincar\'{e} duality (see Lemma 2.8 in \cite{AK}), we have 
\begin{align}
  & \dim H_k (\CC_{\bu} (M,u))=0 \ \  (k \neq m), \label{vanishing}\\
  & \dim H_m (\CC_{\bu} (M,u))=\dim H^m(M,\na_{\om}). \nonumber
\end{align}
~
\begin{Prop}\label{dim}
  Let $x_1 ,\ldots ,x_m$ be generic. 
  \begin{enumerate}[(i)]
  \item We have $\dim H_m (\CC_{\bu} (M,u))=2^m$. 
  \item The twisted cycles $\{ \De_I \} _I$ form a basis of $H_m (\CC_{\bu} (M,u))$. 
  \item The integrations of $u\vph$ on twisted cycles give 
    an isomorphism between $H_m (\CC_{\bu} (M,u))$ and 
    the space of local solutions to $E_C (a,b,c)$. 
  \end{enumerate}
\end{Prop}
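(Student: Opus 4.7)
The plan is to combine the intersection number computation of Theorem \ref{H-intersection} with the identification of periods as local solutions in Theorem \ref{solutions-cycle}, sandwiching $\dim H_m(\CC_\bu(M,u))$ between two bounds that both equal $2^m$.

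For the lower bound, I would invoke Theorem \ref{H-intersection}: under the genericity of the parameters $a,b,c_1,\ldots,c_m$ assumed throughout, each diagonal entry $I_h(\De_{i_1\cdots i_r},\De_{i_1\cdots i_r}^{\vee})$ is nonzero, while all off-diagonal entries vanish. Hence the $2^m\times 2^m$ matrix $\bigl(I_h(\De_I,\De_J^{\vee})\bigr)_{I,J}$ is nonsingular, so the family $\{\De_I\}_I$ is linearly independent in $H_m(\CC_\bu(M,u))$ (and likewise $\{\De_J^{\vee}\}_J$ in $H_m(\CC_\bu(M,u^{-1}))$). This gives $\dim H_m(\CC_\bu(M,u)) \geq 2^m$.

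For the matching upper bound, I would introduce the period map
$\Phi : H_m(\CC_\bu(M,u)) \to \mathrm{Sol}(E_C(a,b,c))$, $\De \mapsto \int_{\De} u\vph$. By Proposition \ref{int-Euler} (applied to the deformation of $\De$ as $x$ varies) together with a differentiation under the integral sign, $\Phi$ lands in the space of solutions to $E_C(a,b,c)$, which by Proposition \ref{solution} has dimension exactly $2^m$. Theorem \ref{solutions-cycle} combined with Proposition \ref{series-cycle} shows that $\Phi(\De_I)$ equals a product of $\Gamma$-values (nonzero under genericity) times the local solution $f_I$. Since the $f_I$ have pairwise distinct leading power behaviors $\prod_p x_{i_p}^{1-c_{i_p}}$ near the origin and the $c_k$ are non-integers, the $2^m$ solutions $f_I$ are linearly independent. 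Therefore $\Phi$ is surjective, and combined with the lower bound this forces $\dim H_m(\CC_\bu(M,u)) = 2^m$ and $\Phi$ to be a linear isomorphism.

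All three claims follow immediately: (i) is the dimension equality just established; (ii) holds because the $2^m$ linearly independent elements $\{\De_I\}_I$ must span a $2^m$-dimensional space; (iii) is precisely the isomorphism $\Phi$. The main technical obstacle is verifying that $\Phi$ actually takes values in $\mathrm{Sol}(E_C)$, namely justifying differentiation under the integral sign and the vanishing of boundary contributions; this is where the careful compact construction of $\tilde{\De}_{i_1\cdots i_r}$ from Section \ref{section-cycle}, with its regularizing circles $S_j$ and correctly chosen branches, is essential. Once this is granted, the rest of the argument is an entirely formal pigeonhole between the two $2^m$-dimensional spaces.
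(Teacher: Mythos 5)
Your argument establishes the lower bound $\dim H_m(\CC_{\bu}(M,u)) \geq 2^m$ correctly: nondegeneracy of the intersection matrix from Theorem \ref{H-intersection} gives linear independence of $\{\De_I\}_I$. But your claimed ``matching upper bound'' is not one. Surjectivity of the period map $\Phi: H_m(\CC_{\bu}(M,u)) \to \mathrm{Sol}$ onto a $2^m$-dimensional space yields $\dim H_m(\CC_{\bu}(M,u)) \geq 2^m$ a second time, not $\leq 2^m$: the map could perfectly well have a nontrivial kernel, i.e.\ a twisted cycle that pairs to zero with the single cohomology class $[\vph]$ for all $x$ near the base point. Nothing in your argument rules this out, so the ``pigeonhole'' never closes and (i) is not proved; (ii) and (iii) then also collapse, since both rest on the dimension count.

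The missing ingredient is exactly what the paper supplies for part (i): by the vanishing $H_k(\CC_{\bu}(M,u))=0$ for $k \neq m$ (display (\ref{vanishing})) together with Theorem 2.2 of \cite{AK}, one has $\dim H_m(\CC_{\bu}(M,u)) = (-1)^m\chi(M)$, and $\chi(M)$ is then computed to equal $(-1)^m\cdot 2^m$ via the theorem of Dimca--Papadima \cite{DP}, which identifies $\chi(D(h)-L)$ with $(-1)^m$ times the degree of the gradient map of the defining polynomial $h$. Some upper-bound input of this kind (or, alternatively, a direct proof that $\Phi$ is injective, which is not at all obvious) is indispensable. Once (i) is in hand, your deductions of (ii) and (iii) do agree with the paper's: (ii) is linear independence plus the dimension count, and (iii) is well-definedness and surjectivity of $\Phi$ plus equality of dimensions.
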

\begin{proof}
  We prove (i).  
  By (\ref{vanishing}) and Theorem 2.2 in \cite{AK}, we have 
  $$
  \dim H_m (\CC_{\bu} (M,u))=(-1)^m \chi (M),
  $$
  where $\chi (M)$ is the Euler characteristic of $M$. 
  It is sufficient to show that $\chi (M)=(-1)^m \cdot 2^m$. 
  Let $h\in \C[T_0 ,\ldots ,T_m]$ be a homogeneous polynomial defined by 
  $$
  h(T):=
  \prod_{i=0}^{m} T_i \cdot \left( \sum_{i=0}^{m} \prod_{j\neq i} T_j \right) ,
  $$
  and let $D(h)=\{ T \in \P^m \mid h(T)\neq 0 \}$. 
  Then we have $\chi (M) =\chi (D(h)-L)$ for 
  some generic hyperplane $L$ in $\P^m$. 
  We consider the gradient map
  $$
  {\rm grad} (h):D(h) \to \P^m ;\quad
  [T_0 :\cdots :T_m ] \mapsto 
  \left[ \frac{\pa h}{\pa T_0}(T) :\cdots : \frac{\pa h}{\pa T_m}(T) \right] .
  $$
  It is easy to see that the degree of ${\rm grad} (h)$ is equal to $2^m$. 
  By Theorem 1 in \cite{DP}, we obtain 
  $\chi (D(h)-L)=(-1)^m \deg ({\rm grad} (h))=(-1)^m \cdot 2^m$, which shows (i). 
  (The author thanks to J. Kaneko for pointing out this fact.)
  \\
  The claim (ii) follows from (i), since 
  the determinant of the intersection matrix $(I_h (\De_I ,\De_J ^{\vee}))$ 
  is not zero by Theorem \ref{H-intersection}.   
  \\
  We show (iii). 
  Let $Sol$ be the space of local solutions to $E_C (a,b,c)$. 
  By Theorem \ref{solutions-cycle}, 
  integrals of $u \vph$ on 
  linear combinations of $\De_{i_1 \cdots i_r}$'s are in $Sol$. 
  Then (ii) implies that the linear map 
  $$
  \Phi :H_m (\CC_{\bu} (M,u)) \to Sol \ ;\quad 
  C \mapsto \int_C u\vph 
  $$
  is defined. 
  Proposition \ref{solution} and Theorem \ref{solutions-cycle} imply 
  that $\Phi$ is surjective. Therefore $\Phi$ is isomorphic 
  because of $\dim H_m (\CC_{\bu} (M,u))=\dim Sol =2^m$.  
\end{proof}

We evaluate some intersection numbers of the twisted cocycles 
\begin{align*}
& \vph =\frac{dt_{1} \wedge \cdots \wedge dt_{m} }{\prod t_k \cdot (1-\sum t_{k} )} ,\\ 
&\vph' :=\frac{dt_1 \wedge \cdots \wedge dt_m}{vw}
=\frac{dt_1 \wedge \cdots \wedge dt_m}{\prod t_k \cdot (1-\sum t_k) \cdot (1-\sum \frac{x_k}{t_k})} .
\end{align*} 
\begin{Th}\label{C-intersection}
  \begin{enumerate}[(i)]
  \item The self-intersection number of $\vph$ is
    \begin{align*}
      &I_{c} (\vph ,\vph) \\
      &= (2\pi \sqrt{-1}) ^{m} \left( \frac{1}{\sum c_{k} \! -\! a\! -\! m\! +\! 1} 
        +\frac{1}{b\! +\! m\! -\! \sum c_{k} } \right) 
      \cdot \sum _{ \{ I^{(r)} \} } \prod _{r=1} ^{m-1} \frac{1}{b+r-\sum c_{i_{p} ^{(r)}} },
    \end{align*}
    where $\{ I^{(r)} \}$ runs sequences of subsets of $\{ 1,\ldots ,m \}$, which satisfy 
    $$
    \{ 1,\ldots ,m \} \supsetneq I^{(m-1)} \supsetneq \cdots \supsetneq I^{(2)} \supsetneq I^{(1)} \neq \emptyset ,
    $$
    and we write $I^{(r)} =\{ i_{1} ^{(r)} ,\ldots ,i_{r} ^{(r)} \}$. 
  \item We have $I_c (\vph ,\vph')=0$.
\end{enumerate}
\end{Th}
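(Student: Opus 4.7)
The strategy is to apply the Cho--Matsumoto/Kita--Yoshida intersection formula, which expresses $I_c(\varphi,\psi)$ as a signed sum over $0$-dimensional strata of the divisor arrangement, weighted by iterated residues and the inverse product of local exponents of $u$. Two preliminary observations organize the computation. First, $\varphi$ has logarithmic poles only along the simplex divisors $H_1,\ldots,H_m,H$, so only flags built from these hyperplanes contribute. Second, a direct computation shows that $D=(w=0)$ contains each codim-$r$ stratum $L_I=\bigcap_{k\in I}H_k$ with multiplicity $r-1$ whenever $|I|=r\ge 2$, so the arrangement is not normal-crossing and must first be resolved.

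For part~(i), I will resolve $M$ by iteratively blowing up along a flag of strata $L_{I^{(1)}}\supsetneq L_{I^{(2)}}\supsetneq\cdots\supsetneq L_{I^{(m)}}=O$, indexed by chains $I^{(1)}\subsetneq\cdots\subsetneq I^{(m)}=\{1,\ldots,m\}$. Since $H_k$ (for $k\in I^{(r)}$) contains $L_{I^{(r)}}$ with multiplicity $1$ and $D$ contains it with multiplicity $r-1$, while $H$ does not, the exceptional divisor $E_{I^{(r)}}$ acquires exponent
\[
\sum_{k\in I^{(r)}}(1-c_k+b)-b(r-1)=b+r-\sum_{p}c_{i_p^{(r)}},
\]
matching the denominators in the stated product. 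After the resolution, the relevant $0$-strata are indexed by chains $\{I^{(r)}\}_{r=1}^{m-1}$ together with a choice of terminating divisor: either $\tilde H$ (producing the factor $1/A$) or $E_{\{1,\ldots,m\}}$ (producing $1/(b+m-\sum c_k)$). Strata lying on the proper transform $\tilde D$ drop out because $\varphi$ has no residue there, and the iterated residue of $\varphi$ at each remaining stratum equals $\pm1$. Summing over flags with the factor $(2\pi\sqrt{-1})^m$ per stratum yields the formula.

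For part~(ii), the form $\varphi'$ has an additional logarithmic pole along $W=(1-\sum x_k/t_k=0)\subset D$. At strata off the proper transform $\tilde W$, the iterated residue of $\varphi'$ equals that of $\varphi$ divided by the value of $1-\sum x_k/t_k$ at the stratum, while at strata on $\tilde W$ the residue of $\varphi$ is zero. The vanishing $I_c(\varphi,\varphi')=0$ will then follow from a combinatorial cancellation among the off-$\tilde W$ contributions, or equivalently from the fact that $\varphi$ is cohomologous under $\nabla_{\omega}$ to a form whose pairing with the ``$W$-pole'' part of $\varphi'$ is trivial.

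The main obstacle is executing the resolution cleanly for arbitrary $m$: verifying that a suitable order of blowups along $\{L_{I^{(r)}}\}$ produces a normal-crossing model in which the exceptional divisors carry precisely the exponents above, and tracking the signs appearing in the iterated residues of $\varphi$. Part~(ii) further hinges on identifying the precise partial-fraction identity underlying the cancellation, which becomes combinatorially involved for large $m$.
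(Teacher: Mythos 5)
Your treatment of part (i) follows essentially the same route as the paper: blow up the strata $H_{i_1}\cap\cdots\cap H_{i_r}$ in decreasing order of codimension, observe that $w$ vanishes to order $r-1$ along such a stratum so that the exceptional divisor $E_{i_1\cdots i_r}$ carries the exponent $b+r-\sum_p c_{i_p}$, and then apply Matsumoto's formula for intersection numbers of logarithmic $m$-forms, the relevant $0$-dimensional strata being indexed by complete flags together with a terminal choice of $\tilde{H}$ or $E_{1\cdots m}$. That outline is correct, and the technical points you flag (the order of blow-ups, the behaviour at infinity, the signs of the iterated residues) are exactly the ones the paper also leaves to a citation of \cite{M-form} and to a subsequent remark.

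Part (ii) contains a genuine gap, and it originates in a misreading of the pole divisor of $\vph'$. You treat $\vph'$ as ``$\vph$ with an additional logarithmic pole along $W=(1-\sum x_k/t_k=0)$'' and propose to sum residues of $\vph'$ over all the $0$-strata of part (i), deferring the conclusion to a ``combinatorial cancellation'' or a cohomologous-form argument that you acknowledge you have not identified. But $\vph'=dt_1\wedge\cdots\wedge dt_m/(vw)$, where $w=\prod t_k\cdot(1-\sum x_k/t_k)$ is a \emph{polynomial} not vanishing on any $H_k$: the factors $t_k$ in the denominator of your presentation are cancelled, so $\vph'$ has poles only along $H=(v=0)$ and $D=(w=0)$, and along none of the $H_k$. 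Moreover, since $w$ vanishes to order exactly $r-1$ along $H_{i_1}\cap\cdots\cap H_{i_r}$ — the same order to which the Jacobian of the blow-up vanishes along $E_{i_1\cdots i_r}$ — the form $\vph'$ acquires no pole along any exceptional divisor either. Consequently the pole divisors of $\vph$ and $\vph'$ in the normal-crossing model share only the single component $\tilde{H}$, so no point lies on $m$ common pole components, and the localization of $I_c$ to common poles yields $I_c(\vph,\vph')=0$ immediately. No cancellation identity is needed, and the one you are looking for does not exist in the form you describe: every $0$-stratum from part (i) involves at least $m-1$ divisors along which $\vph'$ is holomorphic, so its iterated residue of $\vph'$ there is simply zero.
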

\begin{proof}
  (i) The hypersurfaces $H_1 ,\ldots ,H_m ,H, D$ do not form a normal crossing divisor, 
  because of $H_i \cap H_j \subset D$ for $i \neq j$. 
  Thus we blow up $\C^m$ along some intersections of hyperplanes so that 
  the pole divisor of $\vph$ is normally crossing. 
  Firstly we consider the blow up at the origin ($=H_{1} \cap \cdots \cap H_{m}$). 
  Secondly we blow up this along $H_{1} \cap \cdots \cap \check{H}_{k} \cap \cdots \cap H_{m}$ ($k=1,\ldots ,m$). 
  Repeat the blowing up process, lastly we blow up along $H_{i} \cap H_{j}$ ($1\leq i<j\leq m$). 
  For $i_1 <\cdots <i_r ,\ r \geq 2$, the exceptional divisor $E_{i_1 \cdots i_r}$ arising from 
  the blow up along $H_{i_1} \cap \cdots \cap H_{i_r}$ has the exponent 
  $$
  \sum_{p=1}^{r} (1-c_{i_p} +b) -(r-1)b =b+r-\sum_{p=1}^{r} c_{i_p}.
  $$
  By expressing $\vph$ in each coordinates system, results in \cite{M-form} 
  give the self-intersection number of $\vph$. \\
  (ii) 
  By the definition, the pole divisor of $\vph '$ does not contain the exceptional divisors. 
  Hence the pole divisors of $\vph$ and $\vph'$ do not have 
  $m$ or more common factors, which implies $I_c (\vph ,\vph')=0$. 
\end{proof}

\begin{Rem}
  Precisely speaking, to evaluate intersection numbers of twisted cocycles, 
  we should blow up a compactification of $\C^m$ ($\P^m$ or $(\P^1)^m$) so that 
  the pole divisor of $\om =d\log u$ is normally crossing. 
  Though the blowing up process in the above proof is not enough, it is shown that 
  the exceptional divisors arising from the other blowing up processes do not appear as 
  the components of the pole divisor of $\vph$. 
  Thus the proof is completed. 
\end{Rem}

\begin{Rem}
  It seems difficult to evaluate the self-intersection number $I_c (\vph' ,\vph')$. 
  For $m=2$ (i.e., Appell's $F_4$), this number 
  $(=\mathcal{I}_c (\vph_{x,4} ,\vph_{x,4})$ in \cite{GM}$)$ 
  is written by the parameters and the factor of the defining equation of the singular locus 
  (see \cite{GM}).   
  Furthermore, the author does not find $m$-forms which form a basis of $H^m (M,\na_{\om})$. 
\end{Rem}

\section{Twisted period relations} \label{section-TPR}
By Theorem \ref{H-intersection} and \ref{C-intersection}, 
we state our second main theorem. 
\begin{Th}[Twisted period relations] \label{TPR}
We have 
\begin{align}
\label{TPR1}
I_{c} (\vph ,\vph) 
= \sum _{I} \frac{1}{I_{h} (\De _{i_{1} \cdots i_{r}} ,\De _{i_{1} \cdots i_{r}}^{\vee} )}
\cdot g_{i_{1} \cdots i_{r}} \cdot g_{i_{1} \cdots i_{r}} ^{\vee} , \\
\label{TPR2}
I_{c} (\vph ,\vph') 
= \sum _{I} \frac{1}{I_{h} (\De _{i_{1} \cdots i_{r}} ,\De _{i_{1} \cdots i_{r}}^{\vee} )}
\cdot g_{i_{1} \cdots i_{r}} \cdot h_{i_{1} \cdots i_{r}} ^{\vee} ,
\end{align}
where 
\begin{align*}
g_{i_{1} \cdots i_{r}} =\int _{\De _{i_1 \dots i_r}} u\vph ,\ 
g_{i_{1} \cdots i_{r}} ^{\vee} = \int _{\De _{i_1 \dots i_r} ^{\vee}} u^{-1} \vph ,\ 
h_{i_1 \cdots i_r} ^{\vee} = \int _{\De _{i_1 \dots i_r} ^{\vee}} u^{-1} \vph' .
\end{align*}
Further, under the notations 
\begin{align*}
  & a_{i_1 \cdots i_r} :=a-\sum c_{i_p} +r ,\ 
  b_{i_1 \cdots i_r} :=b-\sum c_{i_p} +r ,\ 
  \check{c}^{i_1 \cdots i_r} :=(2,\ldots ,2)-c^{i_1 \cdots i_r} ,
\end{align*}
the equalities (\ref{TPR1}) and (\ref{TPR2}) are reduced to 
\begin{align}
  & \sum_{I} (-1)^r \frac{1\! -\! a_{i_1 \cdots i_r} }{b_{i_1 \cdots i_r}} 
  \cdot F_C (a_{i_1 \cdots i_r} ,b_{i_1 \cdots i_r} ,c^{i_1 \cdots i_r};x) 
  \cdot F_C (2\! -\! a_{i_1 \cdots i_r} ,-b_{i_1 \cdots i_r} ,\check{c}^{i_1 \cdots i_r};x)
  \label{TPR1-2} \\
  &=\frac{(1-a+b) \cdot \prod (1-c_k)}{b b_{1 \cdots m}}  
  \cdot \sum _{ \{ I^{(r)} \} } \prod _{r=1} ^{m-1} \frac{1}{b_{I^{(r)}}}, 
  \nonumber \\
  &\sum_{I} (\! -\! 1)^r (a_{i_1 \cdots i_r} \! -\! 1)
  \! \cdot \! F_C (a_{i_1 \cdots i_r} ,b_{i_1 \cdots i_r} ,c^{i_1 \cdots i_r};x) 
  \! \cdot \! F_C (2\! -\! a_{i_1 \cdots i_r} ,1\! -\! b_{i_1 \cdots i_r} ,\check{c}^{i_1 \cdots i_r};x) 
  \label{TPR2-2} \\
  & =0, \nonumber
\end{align}
respectively. 
\end{Th}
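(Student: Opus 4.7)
The plan is to derive the two identities (\ref{TPR1}) and (\ref{TPR2}) from the standard Riemann-type bilinear relation between the intersection forms on twisted homology and cohomology, and then reduce them to the explicit forms (\ref{TPR1-2}) and (\ref{TPR2-2}) by substituting the evaluations established in the earlier sections.

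First I would invoke the general twisted period relation (see Chapter IV of \cite{KY}): for cocycles $\psi \in H^m(M,\na_{\om})$ and $\psi' \in H^m(M,\na_{-\om})$,
\[
I_c(\psi,\psi') = \sum_{I,J} H^{IJ} \cdot \left( \int_{\De_I} u \psi \right) \cdot \left( \int_{\De_J^{\vee}} u^{-1} \psi' \right),
\]
where $(H^{IJ})$ is the inverse of the intersection matrix $(I_h(\De_I,\De_J^{\vee}))_{I,J}$. By Proposition \ref{dim}(ii) the cycles $\{\De_I\}$ form a basis of $H_m(\CC_{\bu}(M,u))$, and the construction of Section \ref{section-cycle} applied to $u^{-1}$ furnishes a basis $\{\De_J^{\vee}\}$ of $H_m(\CC_{\bu}(M,u^{-1}))$. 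Theorem \ref{H-intersection}(i) shows that $(I_h(\De_I,\De_J^{\vee}))$ is diagonal, so $H^{II}=1/I_h(\De_I,\De_I^{\vee})$ and $H^{IJ}=0$ for $I\neq J$. Substituting $(\psi,\psi')=(\vph,\vph)$ and $(\psi,\psi')=(\vph,\vph')$ gives (\ref{TPR1}) and (\ref{TPR2}) at once.

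Next I would evaluate $g_I$, $g_I^{\vee}$, $h_I^{\vee}$ explicitly. Theorem \ref{solutions-cycle} combined with Proposition \ref{series-cycle} yields
\[
g_I = \prod_p x_{i_p}^{1-c_{i_p}} \cdot \Ga_I \cdot F_C(a_I, b_I, c^I; x),
\]
where $\Ga_I$ is the Gamma prefactor displayed in Proposition \ref{series-cycle}. The multi-valued function $u^{-1}$ has the same shape as $u$ after the parameter substitution $(a,b,c)\mapsto(2-a,-b,(2,\ldots,2)-c)$; applying Theorem \ref{solutions-cycle} to these dual parameters produces $g_I^{\vee} = \prod_p x_{i_p}^{c_{i_p}-1} \cdot \Ga_I^{\vee} \cdot F_C(2-a_I,-b_I,\check{c}^I;x)$. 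For $h_I^{\vee}$, note that $\vph' = \vph \cdot \prod t_k / w$, so the integrand $u^{-1}\vph'$ is obtained from $u^{-1}\vph$ by decreasing the exponent of $w$ by one; in the dual setup this amounts to the further substitution $b' \mapsto b'-1$, which shifts the second argument of $F_C$ from $-b_I$ to $1-b_I$ and updates the Gamma prefactor accordingly.

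Finally I would substitute Theorem \ref{H-intersection}(ii), Theorem \ref{C-intersection}, and the above expressions into the right- and left-hand sides of (\ref{TPR1}) and (\ref{TPR2}). The monomial prefactors collapse since $\prod_p x_{i_p}^{1-c_{i_p}} \cdot \prod_p x_{i_p}^{c_{i_p}-1} = 1$, leaving a sum of pure products of $F_C$'s. The main obstacle will be the bookkeeping of Gamma and trigonometric factors: using the reflection formula (\ref{Gamma}) one must show that $\Ga_I \, \Ga_I^{\vee} / I_h(\De_I,\De_I^{\vee})$ simplifies to an $I$-independent constant times $(-1)^r (1-a_I)/b_I$ in the first case and $(-1)^r (a_I-1)$ in the second; the $I$-independent constant, combined with the $(2\pi\sqrt{-1})^m$ factor in $I_c(\vph,\vph)$ from Theorem \ref{C-intersection}(i), must then match the closed-form prefactor $(1-a+b)\prod(1-c_k)/(b\cdot b_{1\cdots m})$ in (\ref{TPR1-2}), while $I_c(\vph,\vph')=0$ produces the vanishing right-hand side of (\ref{TPR2-2}). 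Once these Gamma simplifications are carried out, the two reduced identities follow.
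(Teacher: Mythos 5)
Your proposal is correct and follows essentially the same route as the paper: the bilinear relation for the intersection pairings (the paper cites \cite{CM}), diagonality of the intersection matrix from Theorem \ref{H-intersection}(i), the evaluations of $g_I$ and $g_I^{\vee}$ via Proposition \ref{series-cycle} and the dual substitution $(a,b,c)\mapsto(2-a,-b,(2,\ldots,2)-c)$, and the reflection formula to collapse the Gamma factors. One tiny slip: lowering the exponent of $w$ by one corresponds to $b'\mapsto b'+1$ (not $b'-1$), though your stated consequence, that the second argument of $F_C$ shifts from $-b_I$ to $1-b_I$, is the correct one.
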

\begin{proof}
  Because of the compatibility of intersection forms and pairings obtained by integrations 
  (see \cite{CM}), 
  we obtain the equalities (\ref{TPR1}) and (\ref{TPR2}). 
  We show that (\ref{TPR1}) is reduced to (\ref{TPR1-2}). 
  By Proposition \ref{series-cycle} and Theorem \ref{solutions-cycle}, we have
  \begin{align*}
    g_{i_{1} \cdots i_{r}} =& \prod _{p=1} ^{r} \Ga (c_{i_{p}} -1) \cdot \prod _{q=1} ^{m-r} \Ga (1-c_{j_{q}} ) 
     \cdot \frac{\Ga (\sum c_{k} -a-m+1) \Ga (1-b)}
     {\Ga (\sum c_{i_{p}} \! -a-r+1) \Ga (\sum c_{i_{p}} \! -b-r+1)} \\
    & \cdot \prod _{p=1} ^{r} x_{i_{p}} ^{1-c_{i_{p}}} \cdot 
    F_C (a+r-\sum c_{i_p} ,b+r-\sum c_{i_p} ,c^{i_1 \cdots i_r};x).
  \end{align*}
  On the other hand, we can express $g_{i_{1} \cdots i_{r}} ^{\vee}$ like this  
  by the replacement 
  $$
  (a,b,c) \longmapsto (2-a,-b,(2,\ldots ,2)-c),
  $$ 
  since $u^{-1}\vph$ is written as 
  $$
  u^{-1}\vph = \prod t_{k} ^{c_{k}-2 } \cdot (1-\sum t_{k} )^{-\sum c_{k} +a+m-2} 
	\cdot \left( 1-\sum \frac{x_{k}}{t_{k}} \right) ^{b} dt_{1} \wedge \cdots \wedge dt_{m} .
  $$
  Thus we obtain 
  \begin{align*}
    g_{i_{1} \cdots i_{r}}^{\vee} =& 
    \prod _{p=1} ^{r} \Ga (1-c_{i_{p}}) \cdot \prod _{q=1} ^{m-r} \Ga (c_{j_{q}}-1 ) \\
    & \cdot \frac{\Ga (-\sum c_{k} +a+m-1) \Ga (1+b)}
     {\Ga (-\sum c_{i_{p}} +a+r-1) \Ga (-\sum c_{i_{p}} +b+r+1)} \\
    & \cdot \prod _{p=1} ^{r} x_{i_{p}} ^{c_{i_{p}}-1} \cdot 
    F_C (2\! -\! a\! -\! r\! +\! \sum c_{i_p} ,-b\! -\! r\! +\! \sum c_{i_p} ,(2,\dots ,2)\! -\! c^{i_1 \cdots i_r};x).
  \end{align*}
By the formula (\ref{Gamma}), we have 
\begin{align*}
  & \prod \Ga(c_k -1)\Ga(1-c_k) \cdot \frac{\Ga (\sum c_{k} -a-m+1)\Ga (-\sum c_{k} +a+m-1)}
      {\Ga (\sum c_{i_{p}} -a-r+1)\Ga (-\sum c_{i_{p}} +a+r-1)} \\
      &\cdot \frac{ \Ga (1-b)\Ga (1+b)}{\Ga (\sum c_{i_{p}} -b-r+1)\Ga (-\sum c_{i_{p}} +b+r+1)}\\
  &=(2\pi \sqrt{-1})^m \cdot \frac{b}{\prod (1-c_k ) \cdot (a+m-1-\sum c_k)} \\ 
     &\ \ \cdot (-1)^r \cdot \frac{a+r-1-\sum c_{i_p}}{b+r-\sum c_{i_p}} 
     \cdot I_{h} (\De _{i_{1} \cdots i_{r}} ,\De _{i_{1} \cdots i_{r}}^{\vee} ) .
\end{align*}
Hence, under the notations 
$a_{i_1 \cdots i_r} ,\ b_{i_1 \cdots i_r}$ and $\check{c}^{i_1 \cdots i_r}$ ,
the equality (\ref{TPR1}) is reduced to 
\begin{align*}
& \left( \frac{1}{1-a_{1 \cdots m}} +\frac{1}{b_{1 \cdots m}} \right) 
	\cdot \sum _{ \{ I^{(r)} \} } \prod _{r=1} ^{m-1} \frac{1}{b_{I^{(r)}}} \\
&= \frac{b}{\prod (1-c_k ) \cdot (a_{1 \cdots m}-1)} \\
& \cdot \sum_{I} (-1)^r \frac{a_{i_1 \cdots i_r} \! -\! 1}{b_{i_1 \cdots i_r}} 
\! \cdot \! F_C (a_{i_1 \cdots i_r} ,b_{i_1 \cdots i_r} ,c^{i_1 \cdots i_r};x) 
\! \cdot \! F_C (2\! -\! a_{i_1 \cdots i_r} ,-b_{i_1 \cdots i_r} ,\check{c}^{i_1 \cdots i_r};x). 
\end{align*}
By multiplying $\DS \frac{(1-a_{1 \cdots m}) \cdot \prod (1-c_k )}{b}$, 
we obtain (\ref{TPR1-2}). 
The similar calculation shows that (\ref{TPR2}) is reduced to (\ref{TPR2-2}).  
\end{proof}
Note that (\ref{TPR1-2}) and (\ref{TPR2-2}) are generalizations of 
some equalities in Corollary 6.1 of \cite{GM}.

\section*{Acknowledgments}
The author thanks Professor Keiji Matsumoto for his useful advice and constant encouragement.
He also thanks Professor Jyoichi Kaneko for helpful comments. 


\begin{thebibliography}{99}
\bibitem{AK} K. Aomoto and M. Kita, translated by K. Iohara, 
  {\it Theory of Hypergeometric Functions} 
  (Springer Verlag, New York, 2011). 

\bibitem{Cho} K. Cho,
  A generalization of Kita and Noumi's vanishing theorems of 
  cohomology groups of local system, 
  {\it Nagoya Math. J.} {\bf 147} (1997) 63--69. 

\bibitem{CM}
  K. Cho and K. Matsumoto, 
  Intersection theory for twisted cohomologies and 
  twisted Riemann's period relations I,   
  {\it Nagoya Math. J.}  {\bf 139} (1995) 67--86. 

\bibitem{DP}
  A. Dimca and S. Papadima, 
  Hypersurface complements, Milnor fibers and higher homotopy groups of arrangments,   
  {\it Ann. of Math.}  {\bf 158} (2003) 473--507. 

\bibitem{G-FA}
  Y. Goto, 
  Twisted period relations
  for Lauricella's hypergeometric function $F_A$, 
  preprint. arXiv:1310.6088. 

\bibitem{GM} Y. Goto and K. Matsumoto, 
  The monodromy representation and twisted period relations 
  for Appell's hypergeometric function $F_4$,
  preprint. arXiv:1310.4243. 

\bibitem{HT} R. Hattori and N. Takayama, 
  The singular locus of Lauricella's $F_C$, 
  to appear in {\it J. Math. Soc. Japan}. 

\bibitem{KY} M. Kita and M. Yoshida, Intersection theory for twisted cycles, 
  ---\hspace{-1mm}--- II, Degenerate arrangements,
  {\it Math. Nachr.} {\bf 166} (1994), 287--304, 
  {\bf 168} (1994) 171--190. 

\bibitem{L} G. Lauricella, 
  Sulle funzioni ipergeometriche a pi\`{u} variabili, 
  {\it Rend. Circ. Mat. Palermo} {\bf 7} (1893) 111--158. 

\bibitem{M-form} K. Matsumoto, 
  Intersection numbers for logarithmic $k$-forms, 
  {\it Osaka J. Math.} {\bf 35} (1998) 873--893. 

\bibitem{Y} M. Yoshida, 
  {\it Hypergeometric functions, my love, -Modular interpretations of configuration spaces-} 
  (Vieweg \& Sohn, Braunschweig, 1997).
\end{thebibliography}
\end{document}